\newtheorem{theorem}{Theorem}[section]
\newtheorem{lemma}[theorem]{Lemma}
\newtheorem{proposition}[theorem]{Proposition}
\newtheorem{corollary}[theorem]{Corollary}
\theoremstyle{definition}
\newtheorem{example}[theorem]{Example}
\theoremstyle{remark}
\numberwithin{equation}{section}
\begin{document}

\title[Characterizations of smooth spaces by $\rho_*$-orthogonality]{Characterizations of smooth spaces by $\rho_*$-orthogonality}
\author[M.S. Moslehian, A. Zamani, M. Dehghani]
{Mohammad Sal Moslehian, Ali Zamani, \MakeLowercase{and} Mahdi Dehghani}
\address [Moslehian]{Department of Pure Mathematics, Ferdowsi University of Mashhad, P.O. Box 1159, Mashhad 91775, Iran} \email{moslehian@um.ac.ir, moslehian@member.ams.org}
\address [Zamani]{Department of Mathematics, Farhangian University, Semnan, Iran} \email{zamani.ali85@yahoo.com}
\address [Dehghani]{Department of Pure Mathematics, Faculty of Mathematical Sciences, University of Kashan, P. O. Box 87317-53153, Kashan, Iran} \email{m.dehghani@kashanu.ac.ir, e.g.mahdi@gmail.com}

\keywords{Orthogonality space; orthogonality preserving mappings; $\rho_*$-orthogonality; norm derivative; smooth normed spaces}

\subjclass[2000]{Primary 46B20; Secondary 47B49, 46C50}

\begin{abstract}
The aim of this paper is to present some results concerning the $\rho_*$-orthogonality in real normed spaces and its
preservation by linear operators. Among other things, we prove that if $T\,: X \longrightarrow Y$ is a nonzero linear $(I, \rho_*)$-orthogonality preserving mapping between real normed spaces, then
$$\frac{1}{3}\|T\|\|x\|\leq\|Tx\|\leq 3[T]\|x\|, \qquad (x\in X)$$
where $[T]:=\inf\{\|Tx\|: \,x\in X, \|x\|=1\}$. We also show that the pair $(X,\perp_{\rho_*})$ is an orthogonality space in the sense of R\"{a}tz. Some characterizations
of smooth spaces are given based on the $\rho_*$-orthogonality.
\end{abstract}
 
\maketitle

\section{Introduction}
Throughout the paper, $(X,\|\cdot\|)$ denote a real normed space of dimension at least $2$. If the norm of $X$ comes from an inner product $\langle\cdot,\cdot\rangle$, then there is a natural orthogonality relation defined by
\begin{align*}
x\perp y\Leftrightarrow\langle x,y\rangle=0 \qquad(x,y\in X).
\end{align*}
A mapping $[\cdot|\cdot]:X\times X\rightarrow\mathbb{R}$ satisfying
\begin{itemize}
\item[(i)] $[r x+ ty|z]=r[x|z]+t[y|z]$;
\item[(ii)] $[x|x]=\|x\|^2$;
\item[(iii)] $|[x|y]|\leq\|x\|\|y\|$,
\end{itemize}
for all $x,y,z\in X$ and all $r, t\in\mathbb{R}$ is called a semi-inner product in $X$. There can be infinitely many such semi-inner products. It is well known that in a normed space $X$, there exists exactly one semi-inner product if and only if $X$ is smooth (i.e., there is a unique supporting hyperplane at each point of the unit sphere $X$, or equivalently, the norm is G\^{a}teaux differentiable on $X$); see \cite{Dra}.

For a given semi-inner product and vectors $x,y\in X$, the semi-inner product orthogonality is defined as follows:
\begin{align*}
x\perp_s y\Leftrightarrow [y|x]=0.
\end{align*}

There are several concepts of orthogonality such as Birkhoff--James and isosceles in an arbitrary normed space $X$, which are recalled as follows (see \cite{A.S.T} and references therein):
\begin{itemize}
\item[(i)] The Birkhoff--James orthogonality $\perp_B$: $x\perp_B y$ if and only if $\|x\|\leq\|x+\lambda y\|$ for all $\lambda\in\mathbb{R}$.
\item[(ii)] The isosceles orthogonality $\perp_I$: $x\perp_I y$ if and only if $\|x+y\|=\|x-y\|$.
\end{itemize}
The following mapping $\langle\cdot,\cdot\rangle_g:X\times X\rightarrow\mathbb{R}$ was introduced by Mili\v{c}i\'{c} \cite{Mil} as follows:
\begin{align*}
\langle y,x\rangle_g=\frac{\rho_-(x,y)+\rho_+(x,y)}{2},
\end{align*}
where the mappings $\rho_-,\rho_+:X\times X\rightarrow\mathbb{R}$ are defined by
\begin{align*}
\rho_{\pm}(x,y):=\lim_{t\rightarrow0^{\pm}}\frac{\|x+ty\|^2-\|x\|^2}{2t}=\|x\|\lim_{t\rightarrow0^{\pm}}\frac{\|x+ty\|-\|x\|}{t}
\end{align*}
and called norm derivatives. In addition, some orthogonality relations are introduced by
\begin{align*}
x\perp_{\rho_-}y\quad\mbox{if and only if}\quad \rho_-(x,y)=0,\\
x\perp_{\rho_+}y\quad\mbox{if and only if}\quad\rho_+(x,y)=0
\end{align*}
and
\begin{align*}
x\perp_{\rho}y\quad\mbox{if and only if}\quad \rho(x,y):=\langle y,x\rangle_g=0.
\end{align*}
For more information about the norm derivatives and their fundamental properties the reader is referred to \cite{A.S.T, C.W.2, C.W.3}.
In \cite{C.L}, the authors introduced the notion of $\rho_*$-orthogonality. Let $x,y\in X$. Then $x$ is $\rho_*$-orthogonal to $y$ (denoted by $x\perp_{\rho_*}y$) if $$\rho_*(x,y) := \rho_-(x,y)\rho_+(x,y) = 0.$$
The main aim of the present work is to investigate the notion of $\rho_*$-orthogonality on a normed space $X$. It is clear that $\perp_{\rho_-}\cup\perp_{\rho_+} = \perp_{\rho_*}\,\subseteq \,\perp_{B}$. But the following examples show that for non-smooth spaces there may be not any one of $\perp_{B}\subseteq\perp_{\rho_*}$, $\perp_{\rho}\subseteq\perp_{\rho_*}$, $\perp_{\rho_*}\subseteq\perp_{\rho}$, $\perp_{\rho_*}\subseteq\perp_{\rho_+}$ and $\perp_{\rho_*}\subseteq\perp_{\rho_-}$ holds.

\begin{example}\label{Ex1}
Consider the space $X=\mathbb{R}^3$ equipped with the norm $\|(x_1,x_2,x_3)\|=|x_1|+|x_2|+|x_3|$. If $x=(1,0,0)$, $y=(1,1,1)$, $z=(1,1,0)$ and $w=(-1,1,0)$, then
\begin{align*}
\|x\|=1\leq |1+\lambda|+2|\lambda|=\|x+\lambda y\| \qquad(\lambda\in\mathbb{R}).
\end{align*}
Hence $x\perp_{B}y$. On the other hand, we have
\begin{align*}
\rho_*(x,y) =\lim_{t\rightarrow 0^-}\frac{|1+t|+2|t|-1}{t}\times\lim_{t\rightarrow 0^+}\frac{|1+t|+2|t|-1}{t}= -1\times 3 = -3.
\end{align*}
Thus $x$ is not $\rho_*$-orthogonal to $y$.
Further, we have
\begin{align*}
\rho_*(x,z) =\lim_{t\rightarrow 0^-}\frac{|1+t|+|t|-1}{t}\times\lim_{t\rightarrow 0^+}\frac{|1+t|+|t|-1}{t} = 0\times2= 0.
\end{align*}
Therefore, $x\perp_{\rho_*}z$ but neither $x\perp_{\rho_+}z$ nor $x\perp_{\rho}z$. Similarly, we have $x\perp_{\rho_*}w$ but not $x\perp_{\rho_-}w$.
\end{example}
\begin{example}\label{Ex2}
Let $X=\mathbb{R}^2$ endowed with the norm $\|(x_1,x_2)\|=\max\{|x_1|, |x_2|\}$. If $x=(1,0)$ and $y=(1,-1)$, then it is easy to see that
\begin{align*}
\rho_*(x,y) = \rho_-(x,y)\rho_+(x,y) = -1 \times 1 = -1.
\end{align*}
Hence $x\perp_{\rho}y$ but not $x\perp_{\rho_*}y$.
\end{example}

A mapping $T:H\rightarrow K$ between two inner product spaces $H$ and $K$ is said to be orthogonality preserving if $x\perp y$ ensures $Tx\perp Ty$ for every $x,y\in H$. It is well known that an orthogonality preserving linear mapping between two inner product spaces is necessarily a similarity, i.e., a scalar multiple of an isometry; see \cite{Ch.0, Z.M.F, Z.C.H.K}.
Now, let $X$ and $Y$ be normed spaces and let $\diamondsuit \in \{B, I, s, \rho_-, \rho_+, \rho, \rho_*\}$. Let us consider linear mappings $T:X\rightarrow Y$ which preserve the $\perp_{\diamondsuit}$ orthogonality in the following sense:
$$x\perp_{\diamondsuit} y\Rightarrow Tx\perp_{\diamondsuit} Ty \qquad(x,y\in X).$$

For $\diamondsuit \in \{B, s\}$, it has been proved by Blanco and Turn\v{s}ek \cite{B.T} that a linear mapping preserving $\diamondsuit$-orthogonality has to be a similarity. Martini and Wu \cite{M.W} proved that a linear mapping $T$
preserves $I$-orthogonality if and only if $T$ is a similarity; see also \cite{CW1}. In \cite{C.W.2,C.W.3, W}, for $\diamondsuit \in \{\rho_-, \rho_+, \rho\}$, Chmieli\'{n}ski and W\'{o}jcik proved that a linear mapping which preserves $\diamondsuit$-orthogonality is a similarity. Various kinds of orthogonality preserving mappings have been studied by the authors of the present paper; cf. \cite{Z, Z.M}.

Let us now suppose that $\perp$ is a binary relation on a vector space $X$ with the following properties:

(O1) Totality of $\perp$ for zero: $x \perp 0$ and $0 \perp x$ for all $x\in X$;

(O2) Independence: if $x, y \in X\setminus\{0\}$ and $x \perp y$, then $x$ and $y$ are linearly independent;

(O3) Homogeneity: if $x, y \in X$ and $x \perp y$, then $\alpha x \perp \beta y$ for all $\alpha, \beta \in \mathbb{R}$;

(O4) The Thalesian property: let $P$ be a two-dimensional subspace of $X$. If $x\in P$ and $\lambda \geq 0$, then there exists $y \in P$ such that $x \perp y$ and $x + y\perp \lambda x - y$.

The pair $(X, \perp)$ is called an orthogonality space in the sense of R\"{a}tz \cite{R}. Some examples of special interest are:

(i) The trivial orthogonality on a vector space $X$ defined by (O1), and for nonzero elements $x, y \in X$, $x \perp y$ if and only if $x$ and $y$ are linearly independent.

(ii) The ordinary orthogonality on an inner product space $X$.

(iii) The Birkhoff--James orthogonality on a normed space $X$ (see \cite{A.S.T}).

(iv) The $\rho$-orthogonality on a normed space $X$ (see \cite{A.S.T}).

In Section 2, we first give basic properties of the $\rho_*$-orthogonality. Then we consider classes of linear mappings preserving this kind of orthogonality. In Section 3, we give some characterizations of smooth spaces in terms of $\rho_*$-orthogonal preserving mappings. In the last section we will prove that the pair $(X,\perp_{\rho_*})$ is an orthogonality space. Some other related results are also presented.


\section{$\rho_*$-orthogonality preserving mappings}
We start this section with some properties of the $\rho_*$-orthogonality.
\begin{proposition}\label{pr1}
Let $(X,\|\cdot\|)$ be a normed space. Then
\begin{itemize}
\item[(i)] $\rho_*(tx, y) = \rho_*(x, ty) = t^2\rho_*(x, y)$ for all $x,y\in X$ and all $t\in\mathbb{R}$.
\item[(ii)] $|\rho_*(x,y)|\leq\|x\|^2\|y\|^2$ for all $x,y\in X$.
\item[(iii)] For all nonzero vectors $x,y\in X$, if $x\perp_{\rho_*}y$, then $x$ and $y$ are linearly independent.
\item[(iv)] $\rho_*(x, tx + y) = t^2\|x\|^4 + 2t\|x\|^2\rho(x,y) + \rho_*(x,y)$ for all $x,y\in X$ and all $t\in\mathbb{R}$.
\end{itemize}
\end{proposition}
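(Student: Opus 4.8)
The plan is to reduce all four identities to the standard first-order properties of the one-sided norm derivatives $\rho_\pm$, which I take from \cite{A.S.T, C.W.2, C.W.3}. The facts I intend to use are: positive homogeneity with the sign-flip rule, namely $\rho_\pm(\alpha x, y) = \alpha\rho_\pm(x,y)$ and $\rho_\pm(x,\alpha y) = \alpha\rho_\pm(x,y)$ for $\alpha \ge 0$, while $\rho_\pm(\alpha x, y) = \alpha\rho_\mp(x,y)$ and $\rho_\pm(x,\alpha y) = \alpha\rho_\mp(x,y)$ for $\alpha < 0$; the normalization $\rho_\pm(x,x) = \|x\|^2$; the Cauchy--Schwarz type bound $|\rho_\pm(x,y)| \le \|x\|\,\|y\|$; and the translation identity $\rho_\pm(x,\alpha x + y) = \alpha\|x\|^2 + \rho_\pm(x,y)$. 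Each of these can be read off directly from the defining limit, so for a self-contained account the substitution $s = \alpha u$ (for homogeneity) and factoring out the positive scalar $1+\alpha t$ for small $t$ (for the translation identity) supply one-line verifications.

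For (i) I would argue case by case on the sign of $t$. When $t > 0$ the factor pulls out of each one-sided derivative with its sign preserved, giving $\rho_*(tx,y) = (t\rho_-(x,y))(t\rho_+(x,y)) = t^2\rho_*(x,y)$; when $t < 0$ the sign-flip rule gives $\rho_-(tx,y) = t\rho_+(x,y)$ and $\rho_+(tx,y) = t\rho_-(x,y)$, so the product is $(t\rho_+(x,y))(t\rho_-(x,y)) = t^2\rho_*(x,y)$ again, the swap being invisible because multiplication is commutative; the case $t = 0$ is trivial, and the computation in the second slot is identical. Statement (ii) is immediate: multiplying the two bounds $|\rho_-(x,y)| \le \|x\|\|y\|$ and $|\rho_+(x,y)| \le \|x\|\|y\|$ yields $|\rho_*(x,y)| = |\rho_-(x,y)|\,|\rho_+(x,y)| \le \|x\|^2\|y\|^2$.

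For (iii) I would prove the contrapositive. If the nonzero vectors $x,y$ are linearly dependent, then $y = \lambda x$ with $\lambda \ne 0$, and by (i) together with $\rho_\pm(x,x) = \|x\|^2$ we get $\rho_*(x,y) = \rho_*(x,\lambda x) = \lambda^2\rho_*(x,x) = \lambda^2\|x\|^4 \ne 0$, so $x \not\perp_{\rho_*} y$. For (iv) the translation identity turns each factor into an affine function of $t$, namely $\rho_\pm(x,tx+y) = t\|x\|^2 + \rho_\pm(x,y)$. Multiplying,
\begin{align*}
\rho_*(x,tx+y) &= \big(t\|x\|^2 + \rho_-(x,y)\big)\big(t\|x\|^2 + \rho_+(x,y)\big)\\
&= t^2\|x\|^4 + t\|x\|^2\big(\rho_-(x,y) + \rho_+(x,y)\big) + \rho_-(x,y)\rho_+(x,y),
\end{align*}
and recognizing $\rho_-(x,y) + \rho_+(x,y) = 2\rho(x,y)$ together with $\rho_-(x,y)\rho_+(x,y) = \rho_*(x,y)$ gives the claimed formula.

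The only genuinely delicate point is the sign bookkeeping in (i): under a negative scalar the two one-sided derivatives interchange, which would matter for a quantity like $\rho_-$ alone but cancels here precisely because $\rho_*$ is the symmetric product $\rho_-\rho_+$. I expect no analytic obstacle beyond confirming that every appeal to the translation identity and to $\rho_\pm(x,x) = \|x\|^2$ is licit, which it is, since for $t$ near $0$ the relevant scalar $1+\alpha t$ is positive and the one-sided limits survive the manipulations.
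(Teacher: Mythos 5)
Your proof is correct and follows essentially the same route as the paper: parts (i) and (ii) from the homogeneity and Cauchy--Schwarz properties of $\rho_\pm$ (which the paper leaves as immediate from the definition), part (iii) by the same dependence-gives-$\lambda^2\|x\|^4\neq 0$ argument (contrapositive versus the paper's contradiction, a cosmetic difference), and part (iv) by the identical expansion via $\rho_\pm(x,tx+y)=t\|x\|^2+\rho_\pm(x,y)$. No gaps; your sign-flip bookkeeping for negative $t$ in (i) is exactly the right point to check and is handled correctly.
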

\begin{proof}
The statements (i) and (ii) follow directly from the definition of $\rho_*$. To establish (iii) suppose that $x,y\in X$ are nonzero elements of $X$ and $x\perp_{\rho_*}y$. Assume that there exists a nonzero $t\in\mathbb{R}$ such that $x=ty$. Then
\begin{align*}
\rho_*(x,y)=\rho_*(ty,y)=t^2\|y\|^4=0.
\end{align*}
It follows that $t=0$, which is impossible. Therefore, $x,y$ are linearly independent.

To prove (iv), assume that $x,y\in X$ and $t\in\mathbb{R}$. By the basic properties of $\rho_{\pm}$, we have $\rho_{\pm}(x, tx+y)=t\|x\|^2+\rho_{\pm}(x,y)$ \cite[Theorem 2.1.1]{A.S.T}. Therefore,
\begin{align*}
\rho_*(x, tx+y)&=\rho_-(x, tx+y)\rho_+(x, tx+y)\\
&=\Big(t\|x\|^2+\rho_-(x,y)\Big)\Big( t\|x\|^2+\rho_+(x,y)\Big)\\
&=t^2\|x\|^4+t\|x\|^2\Big(\rho_-(x,y)+\rho_+(x,y)\Big)+\rho_-(x,y)\rho_+(x,y)\\
&=t^2\|x\|^4+2t\|x\|^2\rho(x,y)+\rho_*(x,y).
\end{align*}
\end{proof}
\begin{proposition}
Let $(X,\|\cdot\|)$ be a normed space and let $[\cdot|\cdot]$ be a given semi-inner product in $X$. Then the following conditions are equivalent:
\begin{itemize}
\item[(i)] $\perp_{\rho_*}=\perp_{s}$.
\item[(ii)] $\perp_{\rho_*} \subseteq \perp_{s}$.
\item[(iii)] $\rho_*(x,y)=[y|x]^2$ for all $x,y\in X$.
\end{itemize}
\end{proposition}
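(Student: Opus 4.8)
The plan is to prove the three conditions equivalent by establishing the cycle (i) $\Rightarrow$ (ii) $\Rightarrow$ (iii) $\Rightarrow$ (i). Two of these links are essentially immediate, so I would dispose of them first. The implication (i) $\Rightarrow$ (ii) is trivial, since an equality of relations certainly entails one inclusion. For (iii) $\Rightarrow$ (i), I would simply note that if $\rho_*(x,y)=[y|x]^2$ for all $x,y$, then $\rho_*(x,y)=0$ holds exactly when $[y|x]^2=0$, i.e.\ exactly when $[y|x]=0$; hence $x\perp_{\rho_*}y \Leftrightarrow x\perp_s y$, which is (i). All the real content therefore sits in the implication (ii) $\Rightarrow$ (iii).

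For (ii) $\Rightarrow$ (iii), the idea is to exploit the freedom to translate the second argument by multiples of the first. Fix $x\neq 0$ and $y\in X$ (the case $x=0$ is trivial, both sides vanishing). The crucial tool is the shift identity $\rho_{\pm}(x,tx+y)=t\|x\|^2+\rho_{\pm}(x,y)$, which underlies Proposition~\ref{pr1}(iv) and is recorded in \cite[Theorem 2.1.1]{A.S.T}. It shows that each one-sided derivative can be driven to zero by a suitable choice of $t$: taking $t_-=-\rho_-(x,y)/\|x\|^2$ gives $\rho_-(x,t_-x+y)=0$, and taking $t_+=-\rho_+(x,y)/\|x\|^2$ gives $\rho_+(x,t_+x+y)=0$.

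I would then feed these two vectors into the hypothesis. Since $\rho_*$ is the product $\rho_-\rho_+$, the vanishing of either factor forces $\rho_*(x,t_{\pm}x+y)=0$, that is $x\perp_{\rho_*}(t_{\pm}x+y)$. Hypothesis (ii) now yields $x\perp_s(t_{\pm}x+y)$, i.e.\ $[t_{\pm}x+y\,|\,x]=0$. Expanding by linearity in the first slot and using $[x|x]=\|x\|^2$ gives $[t_{\pm}x+y\,|\,x]=t_{\pm}\|x\|^2+[y|x]$, so the two substitutions produce $[y|x]=\rho_-(x,y)$ and $[y|x]=\rho_+(x,y)$. Consequently $\rho_-(x,y)=\rho_+(x,y)=[y|x]$, whence $\rho_*(x,y)=\rho_-(x,y)\rho_+(x,y)=[y|x]^2$, which is (iii).

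The step I expect to be the real obstacle — or rather the one requiring the key idea — is recognizing that the choice $z=t_{\pm}x+y$ is engineered precisely so that the single scalar hypothesis ``$\rho_*=0\Rightarrow[\,\cdot\,|\,\cdot\,]=0$'' can be applied twice, once for each one-sided derivative, thereby pinning down $[y|x]$ from both sides. Notably, the equality $\rho_-(x,y)=\rho_+(x,y)$ that then drops out for every $x,y$ says precisely that $X$ is smooth, so the proposition is really identifying exactly when the $\rho_*$- and semi-inner-product orthogonalities can agree. Once this substitution is in hand, the remainder is routine linearity bookkeeping, together with the trivial case $x=0$.
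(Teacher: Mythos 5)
Your proposal is correct and follows essentially the same route as the paper: the paper also dismisses (i)$\Rightarrow$(ii) and (iii)$\Rightarrow$(i) as clear, and proves (ii)$\Rightarrow$(iii) by applying the hypothesis to the very same test vectors $\frac{-\rho_{\pm}(x,y)}{\|x\|^2}x+y$, obtaining $\rho_-(x,y)=[y|x]=\rho_+(x,y)$ and hence $\rho_*(x,y)=[y|x]^2$. Your explicit treatment of the case $x=0$ and the remark that the conclusion forces smoothness are harmless additions, not a different method.
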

\begin{proof}
The implications (i)$\Rightarrow$(ii) and (iii)$\Rightarrow$(i) are clear. Next, suppose that (ii) holds and let $x,y\in X$. We have
\begin{align*}
x\perp_{\rho_*}\frac{-\rho_+(x,y)}{\|x\|^2}x+y \qquad \mbox{and} \qquad x\perp_{\rho_*}\frac{-\rho_-(x,y)}{\|x\|^2}x+y.
\end{align*}
It follows from (ii) that
\begin{align*}
\left[\frac{-\rho_+(x,y)}{\|x\|^2}x+y|x\right] = 0 \qquad \mbox{and} \qquad \left[\frac{-\rho_-(x,y)}{\|x\|^2}x+y|x\right] = 0.
\end{align*}
Thus
\begin{align*}
\rho_+(x,y)=[y|x] \qquad \mbox{and} \qquad \rho_-(x,y)=[y|x].
\end{align*}
Therefore
\begin{align*}
\rho_*(x,y)= \rho_-(x,y) \rho_+(x,y) = [y|x]^2.
 \end{align*}
\end{proof}
\begin{proposition}\label{pr.1.1}
Let $X$ be a normed space endowed with two norms ${\|\cdot\|}_1$ and ${\|\cdot\|}_2$. Then the following conditions are equivalent:
\begin{itemize}
\item[(i)] The norms ${\|\cdot\|}_1$ and ${\|\cdot\|}_2$ are equivalent.
\item[(ii)] There exists a positive constant $\alpha$ such that
$$\Big|\rho_{*, 1}(x, y) - \rho_{*, 2}(x, y)\Big| \leq \alpha \min\Big\{\|x\|^2_1\,\|y\|^2_1, \|x\|^2_2\,\|y\|^2_2\Big\} \qquad(x, y \in X).$$
\end{itemize}
\end{proposition}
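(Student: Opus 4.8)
The plan is to treat the two implications separately. The forward implication will follow from the Cauchy--Schwarz-type estimate recorded in Proposition~\ref{pr1}(ii), while the reverse implication will hinge on evaluating $\rho_*$ along the diagonal.

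For (i)$\Rightarrow$(ii), I would start from equivalence constants $m, M > 0$ with $m\|x\|_1 \leq \|x\|_2 \leq M\|x\|_1$ for all $x\in X$. Applying Proposition~\ref{pr1}(ii) to each norm and using the triangle inequality gives
\begin{align*}
\big|\rho_{*,1}(x,y) - \rho_{*,2}(x,y)\big| \leq \|x\|_1^2\|y\|_1^2 + \|x\|_2^2\|y\|_2^2.
\end{align*}
Setting $A = \|x\|_1^2\|y\|_1^2$ and $B = \|x\|_2^2\|y\|_2^2$, equivalence yields $m^4 A \leq B \leq M^4 A$, so the right-hand side is at most $(1+M^4)A$ and also at most $(1+m^{-4})B$. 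Choosing $\alpha := \max\{1+M^4,\, 1+m^{-4}\}$ then bounds the difference by $\alpha\min\{A,B\}$, which is precisely (ii).

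The key for (ii)$\Rightarrow$(i) is the observation that $\rho_*$ recovers the norm on the diagonal: since $\rho_\pm(x,x) = \|x\|^2$, one has $\rho_{*,i}(x,x) = \|x\|_i^4$ for each $i$. I would therefore substitute $y = x$ into the hypothesis to obtain
\begin{align*}
\big|\,\|x\|_1^4 - \|x\|_2^4\,\big| \leq \alpha \min\{\|x\|_1^4,\, \|x\|_2^4\} \qquad (x \in X).
\end{align*}
A short case distinction according to which of $\|x\|_1,\|x\|_2$ is larger turns this into $(1+\alpha)^{-1}\|x\|_2^4 \leq \|x\|_1^4 \leq (1+\alpha)\|x\|_2^4$; taking fourth roots gives $(1+\alpha)^{-1/4}\|x\|_2 \leq \|x\|_1 \leq (1+\alpha)^{1/4}\|x\|_2$, so the norms are equivalent.

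I do not expect a genuine obstacle here. The forward direction is immediate once Proposition~\ref{pr1}(ii) is invoked, and the reverse direction reduces, via the substitution $y = x$, to an elementary comparison of fourth powers. The only point demanding a moment's care is confirming $\rho_\pm(x,x) = \|x\|^2$, which follows directly from the definition since $\lim_{t\to 0^\pm}\frac{(1+t)^2-1}{2t} = 1$.
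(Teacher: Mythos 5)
Your proof is correct and follows essentially the same route as the paper: the forward direction combines Proposition~\ref{pr1}(ii) with the triangle inequality and the equivalence constants, and the reverse direction substitutes $y=x$ to reduce the hypothesis to a comparison of fourth powers of the two norms. Incidentally, your constant $\alpha=\max\{1+M^4,\,1+m^{-4}\}$ is the correct one, whereas the paper writes $\max\{1+M^2,\,1+\tfrac{1}{m^2}\}$ --- a small arithmetic slip, since $\|x\|_2^2\|y\|_2^2\leq M^4\|x\|_1^2\|y\|_1^2$.
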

\begin{proof}
First, we prove (i)$\Rightarrow$(ii). If (i) holds, then there are positive scalars $m, M$ such that
\begin{align}\label{Eq3}
m{\|x\|}_1\leq{\|x\|}_2\leq M{\|x\|}_1 \qquad(x\in X).
\end{align}
On the other hand, Proposition \ref{pr1} (ii) implies that
\begin{align*}
|\rho_{*,j}(x,y)|\leq\|x\|^2_j\|y\|^2_j,\qquad\mbox{($j=1,2$\, and \,$x,y\in X$)}.
\end{align*}
From \eqref{Eq3} we conclude that
\begin{align}\label{Eq4}
\rho_{*,1}(x,y)-\rho_{*,2}(x,y)\leq\|x\|^2_1\|y\|^2_1+\|x\|^2_2\|y\|^2_2\leq(1+M^2)\|x\|^2_1\|y\|^2_1.
\end{align}
Similarly,
\begin{align}\label{Eq5}
\rho_{*,1}(x,y)-\rho_{*,2}(x,y)\leq(1+\frac{1}{m^2})\|x\|^2_2\|y\|^2_2.
\end{align}
It follows from \eqref{Eq4} and \eqref{Eq5} that
\begin{align*}
\Big|\rho_{*,1}(x,y)-\rho_{*,2}(x,y)\Big|\leq\max\Big\{1+M^2, 1+\frac{1}{m^2}\Big\}\min\Big\{\|x\|^2_1\,\|y\|^2_1, \|x\|^2_2\,\|y\|^2_2\Big\}.
\end{align*}
Putting $\alpha=\max\Big\{1+M^2, 1+\frac{1}{m^2}\Big\}$, we reach (ii).

Now, we prove that (ii) yields (i). Let $y=x\in X$. Since $\rho_{*,1}(x,x)=\|x\|^2_1$ and $\rho_{*,2}(x,x)=\|x\|^2_2$, we have
\begin{align*}
\Big|\,\|x\|^4_1-\|x\|^4_2\,\Big|\leq\alpha\|x\|^4_1\quad\mbox{and}\quad \Big|\,\|x\|^4_1-\|x\|^4_2\,\Big|\leq\alpha\|x\|^4_2.
\end{align*}
Hence
\begin{align*}
{\|x\|}_2\leq\sqrt[4]{1+\alpha}{\|x\|}_1\quad\mbox{and}\quad{\|x\|}_1\leq\sqrt[4]{1+\alpha}{\|x\|}_2.
\end{align*}
Put $m=\frac{1}{\sqrt[4]{1+\alpha}}$ and $M=\sqrt[4]{1+\alpha}$ to get
\begin{align*}
m{\|x\|}_1\leq{\|x\|}_2\leq M{\|x\|}_1 \qquad(x\in X).
\end{align*}
Thus the norms ${\|\cdot\|}_1$ and ${\|\cdot\|}_2$ are equivalent.
\end{proof}
Recall that a normed space $(X, \|\cdot\|)$ satisfies the $\delta$-parallelogram law for some $\delta\in[0 , 1)$, if the double inequality
\begin{align}\label{id.0011}
2(1 - \delta)\|z\|^2 \leq \|z + w\|^2 + \|z - w\|^2 - 2\|w\|^2 \leq 2(1 + \delta)\|z\|^2
\end{align}
holds for all $z, w \in X$; cf. \cite{Ch.1}. To get the next result we use some ideas of \cite{Ch.1}.
\begin{proposition}\label{pr.123456}
Suppose that $(X, \|\cdot\|)$ is a normed space such that $\|\cdot\|$ satisfies the $\delta$-parallelogram law for some $\delta\in[0 , 1)$.
Then there exists an inner product $\langle \cdot, \cdot\rangle$ in $X$ such that
$$\Big|\rho_*(x, y) - {\langle x, y\rangle}^2\Big| \leq \frac{2 - \delta}{1 - \delta}\min\Big\{\|x\|^2\,\|y\|^2, |||x|||^2\,|||y|||^2\Big\} \qquad(x, y \in X),$$
where $|||\cdot|||$ is the norm generated by $\langle\cdot, \cdot\rangle$.
\end{proposition}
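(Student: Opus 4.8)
My plan is to read the claimed inequality as a statement about how the $\rho_*$-functional changes when the norm $\|\cdot\|$ is replaced by a nearby Euclidean norm. The guiding remark is that if $|||\cdot|||$ is the norm induced by an inner product $\langle\cdot,\cdot\rangle$, then $(X,|||\cdot|||)$ is smooth and $\rho_{\pm}^{|||\cdot|||}(x,y)=\langle x,y\rangle$, so that the $\rho_*$-functional of $|||\cdot|||$ is exactly $\langle x,y\rangle^2$. Consequently, once a suitable inner product has been produced, the target quantity $\big|\rho_*(x,y)-\langle x,y\rangle^2\big|$ is precisely $\big|\rho_{*,1}(x,y)-\rho_{*,2}(x,y)\big|$ for the pair of norms $\|\cdot\|_1=\|\cdot\|$ and $\|\cdot\|_2=|||\cdot|||$, which is the very quantity controlled in Proposition \ref{pr.1.1}.

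The heart of the proof, and where I would borrow from \cite{Ch.1}, is the construction of the inner product. Writing the $\delta$-parallelogram law \eqref{id.0011} as the symmetric bound
\[
\Big|\,\|z+w\|^2+\|z-w\|^2-2\|z\|^2-2\|w\|^2\,\Big|\le 2\delta\,\min\{\|z\|^2,\|w\|^2\} \qquad (z,w\in X),
\]
I would bilinearize the approximately bilinear form $b(x,y):=\tfrac14\big(\|x+y\|^2-\|x-y\|^2\big)$ by means of an invariant mean, thereby obtaining a genuine symmetric positive-definite form $\langle\cdot,\cdot\rangle$, while keeping track of how the defect measured by $\delta$ degrades the comparison of $|||\cdot|||$ with $\|\cdot\|$. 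The estimate I am aiming at is
\[
\sqrt{1-\delta}\,\|x\|^2\le|||x|||^2\le\frac{1}{\sqrt{1-\delta}}\,\|x\|^2 \qquad (x\in X),
\]
equivalently $m\|x\|\le|||x|||\le M\|x\|$ with $m^4=1-\delta$ and $M^4=\tfrac{1}{1-\delta}$.

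With such an inner product at hand, I would reproduce the estimate in the proof of Proposition \ref{pr.1.1}. Applying Proposition \ref{pr1}(ii) to each of the two norms yields $|\rho_*(x,y)|\le\|x\|^2\|y\|^2$ and $\langle x,y\rangle^2=\rho_{*}^{|||\cdot|||}(x,y)\le|||x|||^2|||y|||^2$; inserting the equivalence constants $m,M$ into the two-sided argument of Proposition \ref{pr.1.1} then gives
\[
\big|\rho_*(x,y)-\langle x,y\rangle^2\big|\le\max\Big\{1+M^4,\ 1+\tfrac{1}{m^4}\Big\}\,\min\Big\{\|x\|^2\|y\|^2,\ |||x|||^2|||y|||^2\Big\}.
\]
Since $1+M^4=1+\tfrac{1}{m^4}=1+\tfrac{1}{1-\delta}=\tfrac{2-\delta}{1-\delta}$, the maximum collapses to $\tfrac{2-\delta}{1-\delta}$ and the assertion follows.

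The step I expect to be the real obstacle is the construction of the inner product together with the sharp control of its equivalence constants: the bilinearization by an invariant mean is routine, but verifying that the loss incurred in passing from $b$ to $\langle\cdot,\cdot\rangle$ is measured exactly by the $\delta$ in \eqref{id.0011}, so that the final coefficient is precisely $\tfrac{2-\delta}{1-\delta}$ and not merely a constant multiple of it, is the delicate accounting for which \cite{Ch.1} supplies the template. I would note in passing that \eqref{id.0011} already forces $X$ to be smooth: the even part $t\mapsto\|x+ty\|^2+\|x-ty\|^2-2\|x\|^2$ is convex, vanishes at $0$, and is $O(t^2)$, so its one-sided derivative at $0$, which equals $2\big(\rho_+(x,y)-\rho_-(x,y)\big)$, must vanish; hence $\rho_*(x,y)=\rho(x,y)^2$. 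This is consistent with the route above, though it is not needed for it.
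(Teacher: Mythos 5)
Your outline coincides with the paper's: both produce an inner product from the $\delta$-parallelogram law via \cite{Ch.1}, use that $\rho_*$ computed in the induced norm equals $\langle x,y\rangle^2$, and finish with the two-sided estimate behind Proposition \ref{pr.1.1}. The genuine gap is the step you yourself call ``the real obstacle'' and never carry out: the equivalence
\[
\sqrt{1-\delta}\,\|x\|^2 \le |||x|||^2 \le \frac{1}{\sqrt{1-\delta}}\,\|x\|^2 ,
\]
i.e.\ $m^4=1-\delta$ and $M^4=\frac{1}{1-\delta}$. This is not what \cite[Proposition 3.2]{Ch.1} delivers, nor what any construction in the paper delivers: the sandwich theorem yields a quadratic $Q$ with $(1-\delta)\|x\|^2\le Q(x)=|||x|||^2\le(1+\delta)\|x\|^2$, i.e.\ $m^2=1-\delta$, $M^2=1+\delta$, hence $m^4=(1-\delta)^2$ and $M^4=(1+\delta)^2$. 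Your postulated constants have guaranteed equivalence ratio $\bigl(M/m\bigr)^4=\frac{1}{(1-\delta)^2}$, strictly smaller for $\delta>0$ than the ratio $\frac{(1+\delta)^2}{(1-\delta)^2}$ coming out of \cite{Ch.1}; since rescaling $Q$ multiplies both bounds by the same factor, no normalization of that inner product reaches your bounds, and you supply no alternative construction (the invariant-mean bilinearization is asserted, not estimated). So the final arithmetic, though internally correct, rests on an input that is not established.

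It is worth seeing why you were forced into this. Your bookkeeping in the last step is the \emph{correct} version of Proposition \ref{pr.1.1}: from $m\|x\|\le|||x|||\le M\|x\|$ one only gets $|||x|||^2\,|||y|||^2\le M^4\|x\|^2\|y\|^2$, so the constant must be $\max\{1+M^4,\,1+1/m^4\}$, with fourth powers. The paper's proof of Proposition \ref{pr.1.1} writes $1+M^2$ and $1+1/m^2$ at this point, which does not follow when $M>1$, and the paper's proof of the present proposition obtains $\frac{2-\delta}{1-\delta}$ precisely by inserting $m=\sqrt{1-\delta}$, $M=\sqrt{1+\delta}$ into those squared expressions. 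Redoing the computation correctly with the constants actually available from \cite{Ch.1} gives
\[
\max\Big\{1+(1+\delta)^2,\ 1+\frac{1}{(1-\delta)^2}\Big\}=1+\frac{1}{(1-\delta)^2}
=\frac{2-2\delta+\delta^2}{(1-\delta)^2}>\frac{2-3\delta+\delta^2}{(1-\delta)^2}=\frac{2-\delta}{1-\delta}
\]
for all $\delta\in(0,1)$. In other words, your gap sits exactly where the paper's own argument is defective: honest arithmetic with the known norm-equivalence constants proves the inequality only with the larger constant $1+\frac{1}{(1-\delta)^2}$, and to rescue the stated constant $\frac{2-\delta}{1-\delta}$ you had to postulate sharper equivalence bounds than anyone has proved. (Your closing observation that the $\delta$-parallelogram law forces smoothness of $X$, hence $\rho_*=\rho^2$, is correct and worth keeping, but it does not repair this step.)
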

\begin{proof}
From the $\delta$-parallelogram law (\ref{id.0011}), we get
$$2(1 - \delta)\|z\|^2 \leq \|z + w\|^2 + \|z - w\|^2 - 2\|w\|^2 \leq 2(1 + \delta)\|z\|^2 \qquad(z ,w \in X).$$
Let us define $h(z) := (1 - \delta)\|z\|^2$, $f(z) := \|z\|^2$ and $g(z) := (1 + \delta)\|z\|^2$ for $z\in X$. Then
$$2h(z) \leq f(z + w) + f(z - w) - 2f(w) \leq 2g(z) \qquad(z, w\in X).$$
By \cite[Proposition 3.2]{Ch.1}, there exists a real quadratic mapping $Q\,:X\longrightarrow \mathbb{R}$, i.e., a mapping satisfying
$Q(z + w) + Q(z - w) = 2Q(z) + 2Q(w)\,\,(z, w \in X)$, such that
\begin{align}\label{id.001}
(1 - \delta)\|z\|^2 \leq Q(z) \leq (1 + \delta)\|z\|^2,\qquad(z \in X).
\end{align}
Hence $Q(z) > 0$ for $z \in X\setminus\{0\}$ and $Q(0) = 0$.
Now, define $${\langle z, w\rangle} : = \frac{1}{4}[Q(z + w) - Q(z - w)],\qquad(z ,w \in X).$$
It follows from (\ref{id.001}) that ${\langle \cdot, \cdot\rangle}$ is locally bounded with respect to each variable. Due to $Q$ is quadratic, ${\langle \cdot, \cdot\rangle}$ is symmetric and biadditive. Thus
${\langle \cdot, \cdot\rangle}$ is linear in each variable. Hence ${\langle \cdot, \cdot\rangle}$ is an inner product in $X$ generating the norm $|||z||| := \sqrt{Q(z)}$, $z \in X$. Now, we can write (\ref{id.001}) as
$$\sqrt{1 - \delta}\|z\| \leq |||z||| \leq \sqrt{1 + \delta}\|z\| \qquad(z \in X).$$
Therefore, the norms $\|\cdot\|$ and $|||\cdot|||$ are equivalent. Since $\max\Big\{1+(\sqrt{1 + \delta})^2, 1+\frac{1}{(\sqrt{1 - \delta})^2}\Big\} = \frac{2 - \delta}{1 - \delta}$,
the assertion follows from Proposition \ref{pr.1.1}.
\end{proof}
The next result plays an essential role in our investigation. We should notify that the equivalence (i)$\Leftrightarrow$(iii) of the following result was already
shown in \cite[Theorem 2.2]{C.L}. We shall prove it by a different approach.
\begin{theorem}\label{L1}
Let $X,Y$ be normed spaces and let $T\,:X\longrightarrow Y$ be a nonzero linear mapping.
Then the following conditions are equivalent:
\begin{itemize}
\item[(i)] $x\perp_{\rho_*}y\, \Longrightarrow Tx\perp_{\rho_*}Ty \quad (x, y \in X)$.
\item[(ii)] $x\perp_{\rho_*}y\, \Longrightarrow Tx\perp_B Ty \quad (x, y \in X)$.
\item[(iii)] $\|Tx\| = \|T\|\,\|x\| \quad (x\in X)$.
\item[(iv)] $\rho_*(Tx, Ty) = \|T\|^4\,\rho_*(x, y) \quad (x, y\in X)$.\\
If $X = Y$, then each one of these assertions is also equivalent to
\item[(v)] there exists a semi-inner product $[\cdot|\cdot]\,:X\times X\longrightarrow \mathbb{R}$ satisfying
$$[Tx|Ty] = \|T\|^2[x|y] \qquad (x, y\in X).$$
\end{itemize}
\end{theorem}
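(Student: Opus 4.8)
The plan is to run the cycle $(iii)\Rightarrow(iv)\Rightarrow(i)\Rightarrow(ii)\Rightarrow(iii)$, and then, in the case $X=Y$, to close a short separate loop $(iii)\Rightarrow(v)\Rightarrow(iii)$. Two standard facts about norm derivatives recorded in \cite{A.S.T} will be used throughout: the shift identity $\rho_{\pm}(x,tx+y)=t\|x\|^2+\rho_{\pm}(x,y)$ (already invoked in Proposition \ref{pr1}(iv)), and the characterization of Birkhoff--James orthogonality $u\perp_B v\Longleftrightarrow \rho_-(u,v)\le 0\le\rho_+(u,v)$.

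The three implications $(iii)\Rightarrow(iv)\Rightarrow(i)\Rightarrow(ii)$ are routine. For $(iii)\Rightarrow(iv)$, applying $\|Tz\|=\|T\|\,\|z\|$ to $z=x$ and to $z=x+ty$ gives $\rho_{\pm}(Tx,Ty)=\lim_{t\to 0^{\pm}}\tfrac{\|T\|^2\|x+ty\|^2-\|T\|^2\|x\|^2}{2t}=\|T\|^2\rho_{\pm}(x,y)$, and multiplying the two identities yields $\rho_*(Tx,Ty)=\|T\|^4\rho_*(x,y)$. Then $(iv)\Rightarrow(i)$ is immediate, since $\rho_*(x,y)=0$ forces $\rho_*(Tx,Ty)=0$; and $(i)\Rightarrow(ii)$ follows at once from the inclusion $\perp_{\rho_*}\subseteq\perp_B$ noted in the Introduction.

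The heart of the matter is $(ii)\Rightarrow(iii)$, which I expect to be the main obstacle. Fix $x\neq 0$ and set $c(x):=\|Tx\|^2/\|x\|^2\ge 0$. Feeding into (ii) the two vectors $w_{\pm}:=y-\tfrac{\rho_{\pm}(x,y)}{\|x\|^2}\,x$, the shift identity gives $\rho_{\pm}(x,w_{\pm})=0$, hence $x\perp_{\rho_*}w_{\pm}$, so by (ii) we get $Tx\perp_B Tw_{\pm}$. Expanding $Tw_{\pm}=Ty-\tfrac{\rho_{\pm}(x,y)}{\|x\|^2}Tx$ via the shift identity and the Birkhoff--James characterization, and using $c(x)\|x\|^2=\|Tx\|^2$, I obtain for all $x\neq 0$ and all $y$ the two inequalities
\[
\rho_-(Tx,Ty)\le c(x)\,\rho_-(x,y),\qquad \rho_+(Tx,Ty)\ge c(x)\,\rho_+(x,y).
\]
The decisive observation is that these inequalities upgrade (ii) to full Birkhoff--James preservation: if $x\perp_B y$, then $\rho_-(x,y)\le 0\le\rho_+(x,y)$, and since $c(x)\ge 0$ the displayed inequalities give $\rho_-(Tx,Ty)\le 0\le\rho_+(Tx,Ty)$, i.e. $Tx\perp_B Ty$. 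Thus $T$ preserves $\perp_B$, and the theorem of Blanco and Turn\v{s}ek \cite{B.T} shows that $T$ is a similarity, which is precisely $\|Tx\|=\|T\|\,\|x\|$, that is, (iii).

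Finally, for $X=Y$ I would treat $(iii)\Leftrightarrow(v)$ separately. Here $(v)\Rightarrow(iii)$ is trivial: putting $y=x$ in $[Tx|Ty]=\|T\|^2[x|y]$ yields $\|Tx\|^2=\|T\|^2\|x\|^2$. For $(iii)\Rightarrow(v)$, the map $U:=\|T\|^{-1}T$ is a linear isometry, and it suffices to produce a semi-inner product $[\cdot|\cdot]$ that is $U$-invariant, i.e. $[Ux|Uy]=[x|y]$, for then $[Tx|Ty]=\|T\|^2[Ux|Uy]=\|T\|^2[x|y]$ by the homogeneity of $[\cdot|\cdot]$ in each slot. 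Using the correspondence between semi-inner products and homogeneous selections $x\mapsto J(x)$ of norming functionals (with $J(x)(x)=\|x\|^2$, $\|J(x)\|=\|x\|$, and $[y|x]:=J(x)(y)$), invariance reduces to the relation $U^*J(Uy)=J(y)$; what makes this solvable is that, $U$ being an isometry, the adjoint $U^*$ carries each norming functional at $Uy$ to a norming functional at $y$, so a compatible family $J$ can be selected along the forward orbits of $U$. I expect the only delicate point here to be the bookkeeping of that selection when $U$ is not surjective; everything else in the theorem reduces to the two standard facts quoted at the outset.
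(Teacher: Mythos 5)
Your implications $(iii)\Rightarrow(iv)\Rightarrow(i)\Rightarrow(ii)$ and $(v)\Rightarrow(iii)$ are exactly the ones the paper dismisses as clear, and they are indeed routine.

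For the core implication $(ii)\Rightarrow(iii)$ your argument is correct but takes a genuinely different route. Both proofs begin with the same computation: feed $w_{\pm}=y-\frac{\rho_{\pm}(x,y)}{\|x\|^2}\,x$ into (ii) and translate $Tx\perp_B Tw_{\pm}$ through the characterization $u\perp_B v\Leftrightarrow\rho_-(u,v)\le 0\le\rho_+(u,v)$ into the two inequalities $\rho_-(Tx,Ty)\le c(x)\rho_-(x,y)$ and $\rho_+(Tx,Ty)\ge c(x)\rho_+(x,y)$, where $c(x)=\|Tx\|^2/\|x\|^2$. You exploit them by observing that they force $T$ to preserve Birkhoff--James orthogonality outright, and then you invoke Blanco--Turn\v{s}ek \cite{B.T} to get the similarity. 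The paper runs the same computation along the whole line, i.e.\ at the points $x+ty$, and uses the resulting inequalities to control the signs of the one-sided derivatives of $\varphi_{x,y}(t)=\|Tx+tTy\|/\|x+ty\|$; constancy of $\varphi_{x,y}$ and letting $t\to\infty$ give $\|Tx\|/\|x\|=\|Ty\|/\|y\|$ directly. Your route is shorter and isolates a nice intermediate fact ($\rho_*$-to-$B$ preservation implies $B$-to-$B$ preservation), but it imports the deep theorem of \cite{B.T}; the paper's route costs an elementary derivative argument and stays self-contained, which is precisely its advertised point (the authors say they prove (i)$\Leftrightarrow$(iii) ``by a different approach'' than \cite{C.L}). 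Since the paper itself cites \cite{B.T} elsewhere, your use of it is legitimate.

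The genuine gap is in $(iii)\Rightarrow(v)$. The paper settles this step in one line by citing Koehler--Rosenthal \cite[Theorem 1]{K.R}: every linear isometry $U:X\to X$, surjective or not, preserves some semi-inner product. You instead sketch a proof of that very theorem --- choose a homogeneous selection $J$ of norming functionals coherently, $U^*J(Uy)=J(y)$, ``along the forward orbits of $U$'' --- and describe the residual difficulty as bookkeeping. It is more than bookkeeping. On any ray that $U$ maps to itself the coherence condition constrains the initial choice rather than later ones: if $Ux=-x$ (or more generally $U^kx=x$), coherence together with homogeneity of $J$ forces $J(x)$ to be a fixed point of the affine, weak-$*$ continuous self-map $f\mapsto -U^*f$ (resp.\ $(U^k)^*$) of the set of norming functionals at $x$; such a fixed point exists because that set is weak-$*$ compact and convex, but this is a fixed-point theorem (Markov--Kakutani), not a selection along a chain, and an arbitrary starting choice at a non-smooth point will in general violate the constraint. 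Moreover, for non-surjective $U$, pushing $J(y)$ forward to a norming functional at $Uy$ with $U^*$-pullback equal to $J(y)$ requires a Hahn--Banach extension from $U(X)$ to $X$. All of this is exactly the content of \cite{K.R} (which can also be obtained in one stroke by applying a Banach limit to the bounded sequence $n\mapsto[U^nx|U^ny]_0$ for an arbitrary initial semi-inner product $[\cdot|\cdot]_0$). So as written your treatment of $(iii)\Rightarrow(v)$ does not close; the clean fix is simply to cite \cite[Theorem 1]{K.R}, as the paper does.
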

\begin{proof}
The implications (i)$\Rightarrow$(ii), (iii)$\Rightarrow$(iv),  (iv)$\Rightarrow$(i) and (v)$\Rightarrow$(iii) are clear.

To prove (ii)$\Rightarrow$(iii), suppose that (ii) holds. Fix $x, y\in X\setminus\{0\}$. If $x$ and $y$ are linearly dependent, then $\frac{\|Tx\|}{\|x\|} = \frac{\|Ty\|}{\|y\|}$. Assume that $x$ and $y$ are linearly independent. Let us set
$$\varphi_{x, y}(t): = \frac{\|Tx + tTy\|}{\|x + ty\|} \qquad (t\in \mathbb{R}).$$
We have
$$(\varphi_{x, y})_{{\pm}}'(t): = \frac{\rho_{\pm}(Tx + tTy, Ty)\|x + ty\| - \rho_{\pm}(x + ty, y)\|Tx + tTy\|}{\|x + ty\|^2} \qquad (t\in \mathbb{R}).$$
Simple computations show that
$$\rho_*\left(x + ty, \frac{-\rho_{\pm}(x + ty, y)}{\|x + ty\|^2}(x + ty) + y\right) = 0 \qquad (t\in \mathbb{R}).$$
Hence our assumption yields that
$$Tx + tTy\perp_B \frac{-\rho_{\pm}(x + ty, y)}{\|x + ty\|^2}(Tx + tTy) + Ty.$$
It follows from \cite[Proposition 2.1.7]{A.S.T} that
\begin{align*}
\rho_-&\left(Tx + tTy, \frac{-\rho_{\pm}(x + ty, y)}{\|x + ty\|^2}(Tx + tTy) + Ty\right)
\\&\leq 0
\\& \leq \rho_+\left(Tx + tTy, \frac{-\rho_{\pm}(x + ty, y)}{\|x + ty\|^2}(Tx + tTy) + Ty\right) \qquad (t\in \mathbb{R}).
\end{align*}
This implies
$$\frac{-\rho_-(x + ty, y)}{\|x + ty\|^2}\|Tx + tTy\|^2 + \rho_-(Tx + tTy, Ty)\leq 0 \qquad (t\in \mathbb{R})$$
and
$$0 \leq \frac{-\rho_+(x + ty, y)}{\|x + ty\|^2}\|Tx + tTy\|^2 + \rho_+(Tx + tTy, Ty) \qquad (t\in \mathbb{R}).$$
We conclude that
$$0 \leq (\varphi_{x, y})_{-}'(t) \quad \mbox{and}\quad (\varphi_{x, y})_{+}'(t) \leq 0 \qquad (t\in \mathbb{R}).$$
Hence $\varphi_{x, y}$ is constant on $\mathbb{R}$. Therefore,
$$\frac{\|Tx\|}{\|x\|} = \varphi_{x, y}(0) = \lim_{t\rightarrow \infty}\varphi_{x, y}(t) = \frac{\|Ty\|}{\|y\|}.$$
Consequently, (iii) is valid.

Now, we show (iii)$\Rightarrow$(v). If (iii) holds, then the mapping $U=\frac{T}{\|T\|}:X\longrightarrow X$ is an isometry on $X$. By \cite[Theorem 1]{K.R}, there exists a semi-inner product $[\cdot|\cdot]\,:X\times X\longrightarrow \mathbb{R}$ such that $[Ux|Uy] = [x|y]$ for all $x, y\in X$. Thus $[Tx|Ty] = \|T\|^2[x|y]$ for all $x, y\in X$.
\end{proof}
Recall that a normed space $(X, \|\cdot\|)$ is called uniformly smooth if $X$ satisfies the property that for every $\varepsilon>0$ there exists $\delta>0$ such that if $x,y\in X$ with $\|x\|=1$ and $\|y\|\leq\delta$, then $\|x+y\|+\|x-y\| \leq 2 + \varepsilon\|y\|$; cf. \cite{A.S.T}.

The modulus of smoothness of $X$ is the function $\varrho_X$ defined for every $t > 0$ by the formula
$$\varrho_X(t) = \sup \left\{ \frac{\|x + y \| + \|x - y\|}{2} - 1 \,:\, \|x\| = 1, \; \|y\| = t \right\}.$$
Furthermore, $X$ is called uniformly convex if for every $0<\varepsilon \leq 2$ there is some $\delta>0$ such that for any two vectors with $\|x\| = \|y\| = 1$, the condition $\|x-y\|\geq\varepsilon$ implies that $\left\|\frac{x+y}{2}\right\|\leq 1-\delta.$

The modulus of convexity of $X$ is the function $\sigma_X$ defined by
$$\sigma_X (\varepsilon) = \inf \left\{ 1 - \left\| \frac{x + y}{2} \right\| \,:\, \|x\| = \|y\| = 1, \| x - y \| \geq \varepsilon \right\}.$$

Let $X, Y$ be normed spaces. If a linear mapping $T\,:X\longrightarrow Y$ preserves the $\rho_*$-orthogonality, then from Theorem \ref{L1} we conclude that $T$ must be a similarity. Thus, the spaces $X$ and $Y$ have to share some
geometrical properties. In particular, the modulus of convexity $\delta_X$ and modulus
of smoothness $\varrho_X$ must be preserved, i.e., $\sigma_X = \sigma_{T(X)}$ and $\varrho_X = \varrho _{T(X)}$. As a
consequence, we have the following result.
\begin{corollary}\label{cr.7.9}
Let $X$ be a normed space. Suppose that there exists a normed space $Y$ which is a uniformly convex (uniformly smooth) space, a strictly convex space, or an inner product space and a nontrivial linear mapping $T$ from $X$ into $Y$ (or from $Y$ onto $X$) such that $T$ preserves the $\rho_*$-orthogonality. Then $X$ is, respectively, a uniformly convex (uniformly smooth) space, a strictly convex space, an inner product space.
\end{corollary}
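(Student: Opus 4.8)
The plan is to reduce everything to the fact, furnished by Theorem \ref{L1}, that a nonzero $\rho_*$-orthogonality preserving linear mapping is a similarity, and then to invoke the stability of the four listed geometric properties under similarities together with their inheritance by subspaces. First I would apply the equivalence (i)$\Rightarrow$(iii) of Theorem \ref{L1} to the given mapping $T$ (in whichever direction it acts) to obtain $\|Tz\| = \|T\|\,\|z\|$ for every $z$ in the domain. Since $\|T\| \neq 0$, this shows that $U := T/\|T\|$ is an isometry; in particular $T$ is injective and is an isometric isomorphism of its domain onto its range. Consequently, in the case $T\,:X \longrightarrow Y$ the space $X$ is isometrically isomorphic (after the harmless rescaling by $1/\|T\|$) to the normed subspace $T(X)$ of $Y$, while in the case $T\,:Y \longrightarrow X$ surjectivity together with injectivity makes $T$ a similarity of $Y$ onto all of $X$, so that $X$ is isometrically isomorphic to $Y$ itself.

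The second step is to record how each property travels along a surjective isometry and descends to a subspace. Because the moduli are defined purely in terms of norms of normalized vectors, they are similarity invariants; thus $\sigma_X = \sigma_{T(X)}$ and $\varrho_X = \varrho_{T(X)}$ in the ``into'' case (and $\sigma_X = \sigma_Y$, $\varrho_X = \varrho_Y$ in the ``onto'' case). Moreover, for any normed subspace $W$ of a space $Z$ one has $\sigma_W(\varepsilon) \geq \sigma_Z(\varepsilon)$ and $\varrho_W(t) \leq \varrho_Z(t)$, since the defining infimum (respectively supremum) is then taken over a smaller collection of vectors.

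With these observations the four cases close quickly. If $Y$ is uniformly convex, then $\sigma_Y(\varepsilon) > 0$ for every $\varepsilon \in (0, 2]$, whence $\sigma_X(\varepsilon) = \sigma_{T(X)}(\varepsilon) \geq \sigma_Y(\varepsilon) > 0$ and $X$ is uniformly convex. If $Y$ is uniformly smooth, then $\varrho_Y(t)/t \to 0$ as $t \to 0^+$, and from $\varrho_X(t) = \varrho_{T(X)}(t) \leq \varrho_Y(t)$ we obtain $\varrho_X(t)/t \to 0$, so $X$ is uniformly smooth. For strict convexity and for the inner product case I would not use the moduli at all: strict convexity (absence of nontrivial segments on the unit sphere) and the parallelogram law (equivalently, by Jordan--von Neumann, that the norm arises from an inner product) are both pointwise conditions on pairs of vectors, hence obviously preserved by the isometry $U$ and inherited by the subspace $T(X)$. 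The surjective case $T\,:Y \longrightarrow X$ is even simpler, since there $X$ is isometrically isomorphic to $Y$ and each property transfers directly.

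I do not expect a genuine obstacle here, as Theorem \ref{L1} carries all the analytic weight. The only points demanding care are bookkeeping ones: getting the directions of the two modulus inequalities right (the infimum defining $\sigma$ grows upon passing to a subspace, while the supremum defining $\varrho$ shrinks) and confirming that being an inner product space is subspace-hereditary, which is immediate since the parallelogram identity is a relation to be verified on each pair of vectors and therefore survives restriction to $T(X)$.
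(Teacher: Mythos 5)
Your proposal is correct and follows essentially the same route as the paper: invoke Theorem \ref{L1} to conclude that $T$ is a similarity, and then transfer the geometric properties, with the moduli satisfying $\sigma_X = \sigma_{T(X)}$ and $\varrho_X = \varrho_{T(X)}$ and the remaining properties passing through the (rescaled) isometry and down to the subspace $T(X)$. Your explicit treatment of the subspace-heredity inequalities and of the surjective case merely fills in details the paper leaves implicit; there is no substantive difference in method.
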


Recall that a normed space $(X, \|\cdot\|)$ is equivalent to an inner product space if there exist an inner product in $X$
and a norm $|||\cdot|||$ generated by this inner product such that
$$\frac{1}{k}\|x\| \leq |||x||| \leq k \|x\| \qquad (x\in X)$$
holds for some $k\geq1$; see \cite{Jo}.

\begin{corollary}\label{cr.8}
Any one of the following assertions implies that $X$ is equivalent to an inner product space.
\begin{itemize}
\item[(i)] There exist a normed space $Y$ satisfying the $\delta$-parallelogram law for some $\delta\in[0, 1)$ and a nonzero linear mapping $T: X\longrightarrow Y$ such that $T$ preserves the $\rho_*$-orthogonality.
\item[(ii)] There exist a normed space $Y$ satisfying the $\delta$-parallelogram law for some $\delta\in[0, 1)$ and a nonzero surjective linear mapping $S: Y\longrightarrow X$ such that $S$ preserves the $\rho_*$-orthogonality.
\end{itemize}
\end{corollary}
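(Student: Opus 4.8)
The plan is to reduce both statements to two results already at our disposal: Theorem~\ref{L1}, which forces any nonzero linear $\rho_*$-orthogonality preserving map to be a similarity, and Proposition~\ref{pr.123456}, which says that a space obeying the $\delta$-parallelogram law carries an inner product whose induced norm is equivalent to the original one. Once $Y$ is known to be equivalent to an inner product space, the similarity lets me transport that inner product to $X$, the equivalence constants being preserved up to the single fixed factor $\|T\|$ (resp. $\|S\|$).

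For (i), I would first invoke Theorem~\ref{L1}\,(iii) to get $\|Tx\|=\|T\|\,\|x\|$ for all $x\in X$. Since $T\neq 0$ we have $\|T\|>0$, so this equality shows $T$ is injective. Next, applying Proposition~\ref{pr.123456} to $Y$ produces an inner product $\langle\cdot,\cdot\rangle$ on $Y$ whose norm $|||\cdot|||$ satisfies $\sqrt{1-\delta}\,\|u\|\le|||u|||\le\sqrt{1+\delta}\,\|u\|$ for every $u\in Y$. I would then pull this structure back along $T$, setting $\langle x_1,x_2\rangle_X:=\langle Tx_1,Tx_2\rangle$; bilinearity and symmetry are inherited automatically, while positive-definiteness is exactly where injectivity of $T$ enters, since $\langle x,x\rangle_X=|||Tx|||^2=0$ forces $Tx=0$ and hence $x=0$. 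Writing $|||x|||_X:=|||Tx|||$ for the induced norm and combining this comparison with $\|Tx\|=\|T\|\,\|x\|$ yields $\sqrt{1-\delta}\,\|T\|\,\|x\|\le|||x|||_X\le\sqrt{1+\delta}\,\|T\|\,\|x\|$. Dividing the inner product by $\|T\|$ and then by $\sqrt[4]{1-\delta^2}$ rescales these into the symmetric form $\frac1k\|x\|\le|||x|||_X\le k\|x\|$ with $k=\sqrt[4]{(1+\delta)/(1-\delta)}\ge1$, which is precisely the assertion that $X$ is equivalent to an inner product space.

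For (ii) the argument is the mirror image, with the one extra input that surjectivity makes $S$ invertible. Theorem~\ref{L1}\,(iii) gives $\|Sy\|=\|S\|\,\|y\|$ with $\|S\|>0$, so $S$ is injective; together with the assumed surjectivity this makes $S$ a linear bijection, and $S^{-1}\colon X\to Y$ is again linear. Applying Proposition~\ref{pr.123456} to $Y$ as before, I would transport the inner product to $X$ via $S^{-1}$, defining $\langle x_1,x_2\rangle_X:=\langle S^{-1}x_1,S^{-1}x_2\rangle$; this is genuinely positive-definite because $S^{-1}$ is a bijection. Setting $y=S^{-1}x$ and using $\|x\|=\|S\|\,\|y\|$, the same chain of inequalities gives $\frac{\sqrt{1-\delta}}{\|S\|}\|x\|\le|||x|||_X\le\frac{\sqrt{1+\delta}}{\|S\|}\|x\|$, and the identical rescaling delivers the equivalence with an inner product norm.

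I do not expect a serious obstacle here: the analytic heart of the corollary lives entirely in Theorem~\ref{L1} and Proposition~\ref{pr.123456}, and the remaining work is essentially bookkeeping. The only points demanding a little care are verifying that the transported form is a bona fide inner product (which reduces to injectivity of $T$, resp. bijectivity of $S$), checking that the ambient dimension stays at least $2$ so that Proposition~\ref{pr.123456} applies to $Y$ (guaranteed since the similarity makes $\dim Y\ge\dim X\ge2$), and performing the final rescaling of the inner product to recast the two-sided bound in the normalized form $\frac1k\|x\|\le|||x|||\le k\|x\|$.
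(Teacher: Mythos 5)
Your proof is correct, but it routes the argument differently from the paper. Both proofs start identically: Theorem~\ref{L1} turns the $\rho_*$-orthogonality preserving map into a similarity, $\|Tx\|=\|T\|\,\|x\|$. From there the paper transfers the \emph{hypothesis}: using $\|Tx+Ty\|^2+\|Tx-Ty\|^2=\|T\|^2\bigl(\|x+y\|^2+\|x-y\|^2\bigr)$ together with the $\delta$-parallelogram law in $Y$, it shows that the norm of $X$ itself satisfies the $\delta$-parallelogram law, and then runs the quadratic-functional construction from the proof of Proposition~\ref{pr.123456} on $X$ directly (in case (ii), surjectivity of $S$ is used to write an arbitrary pair in $X$ as $Sy_1, Sy_2$). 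You instead transfer the \emph{conclusion}: you apply Proposition~\ref{pr.123456} to $Y$ to produce an inner product there, and pull it back to $X$ along the injective similarity $T$ (resp.\ along $S^{-1}$, surjectivity entering precisely to invert $S$). Each route has its merits: the paper's version establishes the intrinsic intermediate fact that $X$ satisfies the $\delta$-parallelogram law and never has to verify positive-definiteness of a transported bilinear form; yours is a clean transport-of-structure argument that treats (i) and (ii) symmetrically, isolates exactly where injectivity/bijectivity is needed, and even yields a slightly sharper symmetric constant $k=\sqrt[4]{(1+\delta)/(1-\delta)}$ compared with the paper's $1/\sqrt{1-\delta}$. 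Two cosmetic points: like the paper, you are really invoking the norm equivalence $\sqrt{1-\delta}\,\|u\|\le|||u|||\le\sqrt{1+\delta}\,\|u\|$ established in the \emph{proof} of Proposition~\ref{pr.123456} rather than its statement (the paper signals this by writing ``as in the proof of''); and to rescale the norm by a factor $c$ you must divide the inner product by $c^2$, not by $c$ --- harmless, since a positive multiple of an inner product is again an inner product.
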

\begin{proof}
Suppose (i) holds. By Theorem \ref{L1}, there exists $\gamma>0$ such that $\|Tx\| = \gamma \|x\|$ for all $x\in X$. Hence
\begin{align}\label{id.71}
\|Tx + Ty\|^2 + \|Tx - Ty\|^2 = \gamma^2\left(\|x + y\|^2 + \|x - y\|^2\right) \qquad (x, y\in X).
\end{align}
Further, since the norm in $Y$ satisfies the $\delta$-parallelogram law (\ref{id.0011}), we get
\begin{align}\label{id.72}
\gamma^2\Big(2(1 - \delta)\|x\|^2 + 2\|y\|^2\Big) &= 2(1 - \delta)\|Tx\|^2 + 2\|Ty\|^2\nonumber
\\& \leq \|Tx + Ty\|^2 + \|Tx - Ty\|^2
\\& \leq 2(1 + \delta)\|Tx\|^2 + 2\|Ty\|^2 \nonumber
\\& = \gamma^2\Big(2(1 + \delta)\|x\|^2 + 2\|y\|^2\Big)\nonumber.
\end{align}
Therefore, by (\ref{id.71}) and (\ref{id.72}), we reach
$$2(1 - \delta)\|x\|^2 \leq \|x + y\|^2 + \|x - y\|^2 - 2\|y\|^2 \leq 2(1 + \delta)\|x\|^2 \qquad(x ,y \in X).$$
Hence the norm in $X$ satisfies the $\delta$-parallelogram law. As in the proof of Proposition \ref{pr.123456}, there exists an inner product in $X$
with the generated norm $|||.|||$ such that
\begin{align}\label{id.73}
\sqrt{1 - \delta}\|x\| \leq |||x||| \leq \sqrt{1 + \delta}\|x\| \qquad(x \in X).
\end{align}
It follows from $\sqrt{1 + \delta} \leq \frac{1}{\sqrt{1 - \delta}}$ and (\ref{id.73}) that
$$\sqrt{1 - \delta}\|x\| \leq |||x||| \leq \frac{1}{\sqrt{1 - \delta}}\|x\| \qquad(x \in X).$$
Thus $X$ is equivalent to an inner product space. By the same argument, if (ii) holds we deduce that $X$ is equivalent to an inner product space.
\end{proof}
We remark that the converse of Corollary \ref{cr.8} holds also true. Indeed, if $X$ is equivalent to an inner product space, then we can choose $\delta=0$, $Y=X$ and $T=id$, the identity operator on $X$.
\begin{theorem}
Let $X,Y$ be normed spaces. Then the following conditions are equivalent:
\begin{itemize}
\item[(i)] A linear mapping $T: X\longrightarrow Y$ preserves the $\rho_*$-orthogonality if and only if it is
a scalar multiple of a linear isometry.
\item[(ii)] A linear mapping $T: X\longrightarrow X$ preserves the $\rho_*$-orthogonality if and only if it is
a scalar multiple of a linear isometry.
\item[(iii)] Two $\rho_*$-orthogonality relations on $X$, generated by two norms on $X$, are equivalent if and only if these two norms are proportional.
\end{itemize}
\end{theorem}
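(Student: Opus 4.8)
The plan is to reduce everything to Theorem~\ref{L1}, which for an arbitrary pair of normed spaces already identifies the nonzero linear $\rho_*$-orthogonality preserving maps with the scalar multiples of linear isometries. I would first dispatch the three backward (``if'') directions, which are routine. If $T$ is a scalar multiple of a linear isometry then $\|Tx\|=\|T\|\,\|x\|$, so $T$ preserves $\rho_*$-orthogonality by the chain (iii)$\Rightarrow$(iv)$\Rightarrow$(i) of Theorem~\ref{L1}; this is the easy half of (i), and of (ii) upon taking the codomain to be $X$. For (iii), if $\|\cdot\|_2=c\,\|\cdot\|_1$ with $c>0$, then by the very definition of the norm derivatives $\rho_{\pm,2}=c^2\rho_{\pm,1}$, hence $\rho_{*,2}=c^4\rho_{*,1}$, and the relations $\perp_{\rho_*,1}$ and $\perp_{\rho_*,2}$ coincide. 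The zero map satisfies both sides of each biconditional trivially, so throughout I may assume the maps involved are nonzero.

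For the forward directions, the cases (i) and (ii) are immediate: the implication (i)$\Rightarrow$(iii) of Theorem~\ref{L1} gives $\|Tx\|=\|T\|\,\|x\|$, exhibiting the nonzero $T$ as $\|T\|$ times the isometry $T/\|T\|$, and (ii) is the same argument with $Y=X$. The substance lies in the forward half of (iii), and here the device I would use is the pullback of norms along a map. For an injective linear $T\colon X\to Y$ the formula $\|x\|':=\|Tx\|$ defines a norm on $X$ whose one-sided norm derivatives satisfy $\rho_\pm'(x,y)=\rho_\pm(Tx,Ty)$, hence $\rho_*'(x,y)=\rho_*(Tx,Ty)$ and $x\perp_{\rho_*}'y\Leftrightarrow Tx\perp_{\rho_*}Ty$. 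Specializing to the identity viewed as a map $I\colon(X,\|\cdot\|_1)\to(X,\|\cdot\|_2)$, the hypothesis $\perp_{\rho_*,1}=\perp_{\rho_*,2}$ says precisely that $I$ preserves $\rho_*$-orthogonality, so Theorem~\ref{L1} forces $\|x\|_2=\|I\|\,\|x\|_1$ for all $x$, i.e.\ the two norms are proportional. Since (i), (ii) and (iii) are all thereby established, their mutual equivalence follows.

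The main obstacle is to arrange the forward part of (iii) so that Theorem~\ref{L1} applies cleanly, and two points need care. First, forming the pullback norm requires injectivity, and the relevant maps are injective precisely because a nonzero $\rho_*$-orthogonality preserving linear map satisfies $\|Tx\|=\|T\|\,\|x\|$ with $\|T\|\neq0$; so even the injectivity I need is itself a consequence of Theorem~\ref{L1}. This is also why a genuinely self-contained cyclic proof (i)$\Rightarrow$(ii)$\Rightarrow$(iii)$\Rightarrow$(i) is awkward: the step (iii)$\Rightarrow$(i) would pull the norm of $Y$ back along $T$, but this again presupposes that $T$ is injective. Second, I should note that only one inclusion $\perp_{\rho_*,1}\subseteq\perp_{\rho_*,2}$ is actually needed to make $I$ orthogonality preserving, so the equality hypothesis of (iii) is more than enough, and I must check that the constant $\|I\|$ delivered by Theorem~\ref{L1} is strictly positive so that the conclusion is a genuine proportionality of norms. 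All remaining steps are bookkeeping, since the hard analytic core—the constancy of $t\mapsto\|Tx+tTy\|/\|x+ty\|$ and the resulting norm identity—has already been carried out in Theorem~\ref{L1}.
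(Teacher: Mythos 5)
Your proof is correct, but it takes a genuinely different route from the paper's. You establish that each of (i), (ii), (iii) is unconditionally \emph{true}: the forward halves follow from Theorem \ref{L1} applied to $T$ itself, respectively to the identity map $id\colon(X,\|\cdot\|_1)\to(X,\|\cdot\|_2)$ via the pullback-norm identity $\rho_{\pm}'(x,y)=\rho_{\pm}(Tx,Ty)$, and then you note that three true statements are trivially equivalent. The paper instead proves the cycle (i)$\Rightarrow$(ii)$\Rightarrow$(iii)$\Rightarrow$(i), and its key step (iii)$\Rightarrow$(i) never invokes Theorem \ref{L1}: given a nonzero $\rho_*$-preserving $T\colon X\to Y$, injectivity is obtained by an elementary construction (if $Tx=0$ with $\|x\|=\|y\|=1$, then $3x+y\not\perp_B x$, so $\rho_+(3x+y,x)\neq0$, while $3x+y\perp_{\rho_*}\frac{-\rho_+(3x+y,x)}{\|3x+y\|^2}(3x+y)+x$; preservation and $Tx=0$ force $\|Ty\|^4=0$), after which the norm $|||x|||:=\|Tx\|$ is pulled back and the hypothesis (iii) is applied to the pair $\|\cdot\|,|||\cdot|||$. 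This difference matters in two respects. First, your side remark that a self-contained cyclic proof is ``awkward'' because the injectivity needed in (iii)$\Rightarrow$(i) would itself require Theorem \ref{L1} is mistaken: the $3x+y$ trick supplies injectivity elementarily, and that is exactly how the paper proceeds. Second, your route renders the theorem a corollary of Theorem \ref{L1} (all three assertions simply hold), which is shorter and perfectly legitimate, whereas the paper's conditional argument keeps the statement meaningful as a transfer principle among the three properties, independent of the hard preservation theorem. Your observation that only the inclusion $\perp_{\rho_*,1}\subseteq\perp_{\rho_*,2}$ is needed mirrors the paper's own use of the one-sided inclusion $\perp_{\rho_*,\|\cdot\|}\subseteq\perp_{\rho_*,|||\cdot|||}$ when it invokes (iii), and both proofs gloss over the same corner case of the zero map (which is a scalar multiple of a linear isometry only when some linear isometry $X\to Y$ exists).
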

\begin{proof}
The implications (i)$\Rightarrow$(ii) and (ii)$\Rightarrow$(iii) are trivial. To prove (iii)$\Rightarrow$(i), assume that (iii) holds. Let us consider an arbitrary linear mapping $T\,:X\longrightarrow Y$ that
preserves the $\rho_*$-orthogonality. Clearly, we can assume $T \neq 0$. Note that $T$ must be injective. Indeed, suppose otherwise that there
is a vector $x\in X$ with $\|x\| = 1$ such that $Tx = 0$. Take an arbitrary element $y\in X$ with $\|y\| = 1$. It is easily observed that $3x + y$ and $x$ are not Birkhoff--James orthogonal. Since $\perp_{\rho_+}\,\subseteq \,\perp_{B}$, we have $\rho_+(3x + y, x) \neq 0$.

Furthermore, a simple computation shows that $$\rho_*\left(3x + y, \frac{-\rho_+(3x + y, x)}{\|3x + y\|^2}(3x + y) + x\right) = 0.$$
As $T$ is $\rho_*$-orthogonality preserving, we get $$\rho_*\left(3Tx + Ty, \frac{-\rho_+(3x + y, x)}{\|3x + y\|^2}(3Tx + Ty) + Tx\right) = 0,$$ whence $\frac{{\rho_+}^2(3x + y, x)}{\|3x + y\|^4}\|Ty\|^4 = 0$. Thus $Ty = 0$ and so $T = 0$, which gives rise to a contradiction.
Now, we define a norm on $X$ by $|||x||| : = \|Tx\|\, \,(x\in X)$. We observe that $\perp_{\rho_*, \|\cdot\|}\,\subseteq \,\perp_{\rho_*, |||\cdot|||}$.
Utilizing (iii), we see that the norms $\|\cdot\|$ and $|||\cdot|||$ are proportional and this shows that $T$ is
a scalar multiple of a linear isometry.
\end{proof}
Next, we formulate one of our main results.
\begin{theorem}
Let $X,Y$ be normed spaces and $T:X\rightarrow Y$ be a nonzero linear mapping such that
\begin{align}\label{Eq0}
x\perp_I y\Rightarrow Tx\perp_{\rho_*}Ty \qquad(x,y\in X).
\end{align}
Then
\begin{align*}
\frac{1}{3}\|T\|\|x\|\leq\|Tx\|\leq 3[T]\|x\|\qquad(x\in X),
\end{align*}
where $[T]:=\inf\{\|Tx\|: \,x\in X, \|x\|=1\}$.
\end{theorem}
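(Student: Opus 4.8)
The plan is to reduce the statement to a uniform two-sided comparison of $\|Tx\|$ and $\|Ty\|$ over unit vectors and then recover the two displayed inequalities by passing to the infimum and supremum defining $[T]$ and $\|T\|$. Since every quantity in the conclusion is homogeneous of degree one in $x$, it suffices to prove that $\frac{1}{3}\|T\| \le \|Tx\| \le 3[T]$ whenever $\|x\|=1$, and for this it is enough to establish
\begin{align*}
\|Tx\| \le 3\|Ty\| \qquad (x,y\in X,\ \|x\|=\|y\|=1).
\end{align*}
Indeed, fixing a unit vector $x$ and taking the infimum over unit $y$ gives $\|Tx\|\le 3[T]$, while taking the supremum over unit $y$ gives $\|T\|\le 3\|Tx\|$.

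The key device will be to feed a well-chosen isosceles-orthogonal pair into the hypothesis. For unit vectors $x,y$ I would use the elementary identity
\begin{align*}
\|(x+y)+(x-y)\| = \|2x\| = \|2y\| = \|(x+y)-(x-y)\|,
\end{align*}
which says precisely that $x+y \perp_I x-y$. Applying \eqref{Eq0} to the pair $(x+y,\,x-y)$ then yields $Tx+Ty \perp_{\rho_*} Tx-Ty$, and since $\perp_{\rho_*}\subseteq\perp_B$ (as noted in the Introduction), this gives the Birkhoff--James relation $Tx+Ty \perp_B Tx-Ty$.

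From here the estimate is extracted from the definition of $\perp_B$. Writing $\|Tx+Ty\| \le \|(Tx+Ty)+\lambda(Tx-Ty)\|$ and choosing $\lambda=1$ and $\lambda=-1$ gives
\begin{align*}
\|Tx+Ty\| \le 2\|Tx\| \qquad\text{and}\qquad \|Tx+Ty\| \le 2\|Ty\|,
\end{align*}
so $\|Tx+Ty\|\le 2\min\{\|Tx\|,\|Ty\|\}$. Combining this with the reverse triangle inequality $\|Tx+Ty\|\ge \big|\,\|Tx\|-\|Ty\|\,\big|$ yields $\big|\,\|Tx\|-\|Ty\|\,\big|\le 2\min\{\|Tx\|,\|Ty\|\}$, whence $\max\{\|Tx\|,\|Ty\|\}\le 3\min\{\|Tx\|,\|Ty\|\}$. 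In particular $\|Tx\|\le 3\|Ty\|$, which is the displayed comparison, and the conclusion follows as in the first paragraph.

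The only genuine insight is the choice of the pair $(x+y,\,x-y)$: the whole argument hinges on recognizing that equal-norm vectors produce an isosceles-orthogonal pair to which the hypothesis applies, after which converting $\perp_{\rho_*}$ into $\perp_B$ is automatic. I expect no serious obstacle beyond identifying this substitution; the constant $3$ arises solely from pairing the two Birkhoff--James inequalities with the reverse triangle inequality. I would also expect that the constant can be sharpened to $2$ by invoking the symmetry of $\perp_I$ to obtain in addition $Tx-Ty\perp_B Tx+Ty$, hence $\|Tx-Ty\|\le 2\min\{\|Tx\|,\|Ty\|\}$ and $2\max\{\|Tx\|,\|Ty\|\}\le \|Tx+Ty\|+\|Tx-Ty\|\le 4\min\{\|Tx\|,\|Ty\|\}$, though this improvement is not needed for the stated result.
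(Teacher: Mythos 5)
Your proof is correct. The key substitution is the same one the paper uses: your observation that $x+y\perp_I x-y$ for unit vectors $x,y$ is exactly the paper's reformulation of \eqref{Eq0} as the implication ``$\|x\|=\|y\|\Rightarrow T(\tfrac{x+y}{2})\perp_{\rho_*}T(\tfrac{x-y}{2})$'' (the two differ only by homogeneity of $\rho_*$). Where you genuinely diverge is in extracting the norm estimate from $Tx+Ty\perp_{\rho_*}Tx-Ty$. The paper splits into the cases $\rho_-=0$ and $\rho_+=0$ and, in each, runs a hands-on argument with an auxiliary $\gamma\in(0,1)$, the one-sided-limit definitions of $\rho_{\pm}$, and convexity of an auxiliary function, in effect re-proving in this particular instance the inclusion $\perp_{\rho_*}\subseteq\perp_B$ that is already recorded in its introduction; it thereby obtains $\|Tx+Ty\|\le 2\|Tx\|$ and concludes with the triangle inequality. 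You instead invoke that inclusion once and read off both bounds $\|Tx+Ty\|\le 2\|Tx\|$ and $\|Tx+Ty\|\le 2\|Ty\|$ from the definition of $\perp_B$ with $\lambda=\pm1$; this is shorter, needs no case analysis, and avoids the paper's $\gamma\to 0^{+}$ limiting step (where its strict inequalities degrade to non-strict ones anyway). Your closing remark is also correct and is a genuine strengthening: since $\perp_I$ is symmetric, the hypothesis applies to the ordered pair $(x-y,\,x+y)$ as well, giving $\|Tx-Ty\|\le 2\min\{\|Tx\|,\|Ty\|\}$, and then $2\max\{\|Tx\|,\|Ty\|\}\le\|Tx+Ty\|+\|Tx-Ty\|\le 4\min\{\|Tx\|,\|Ty\|\}$ yields the theorem with constant $2$ in place of $3$ --- a sharpening that the paper's one-sided case analysis, as written, does not produce.
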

\begin{proof}
It is easy to see that \eqref{Eq0} is equivalent to
\begin{align}\label{Eq01}
\|x\|=\|y\|\Rightarrow T\Big(\frac{x+y}{2}\Big)\perp_{\rho_*}T\Big(\frac{x-y}{2}\Big) \qquad(x,y\in X).
\end{align}
Suppose that $x,y\in X$ with $\|x\|=\|y\|=1$. Then \eqref{Eq01} follows that
\begin{align*}
T\Big(\frac{x+y}{2}\Big)\perp_{\rho_*}T\Big(\frac{x-y}{2}\Big).
\end{align*}
Hence either $T\left(\frac{x+y}{2}\right)\perp_{\rho_-}T\left(\frac{x-y}{2}\right)$ or $T\left(\frac{x+y}{2}\right)\perp_{\rho_+}T\left(\frac{x-y}{2}\right)$.

Let us assume that $T\left(\frac{x+y}{2}\right)\perp_{\rho_-}T\left(\frac{x-y}{2}\right)$. We claim that $\|Ty\|<3\|Tx\|$, which ensures that $\|T\|\leq 3[T]$. In order to prove our claim, note that if either $\left\|T\left(\frac{x+y}{2}\right)\right\|=0$ or $\left\|T\left(\frac{x-y}{2}\right)\right\|=0$, then $\|Tx\|=\|Ty\|$ and so $\|Ty\|\leq3\|Tx\|$.

Otherwise, if $\left\|T(\frac{x+y}{2})\right\|\,\left\|T\left(\frac{x-y}{2}\right)\right\|\neq0$, then for some fixed $\gamma\in(0,1)$,
\begin{align*}
\rho_-\Big(T\Big(\frac{x+y}{2}\Big),T\Big(\frac{x-y}{2}\Big)\Big)=0<\gamma\Big\|T\Big(\frac{x+y}{2}\Big)\Big\|\,\Big\|T\Big(\frac{x-y}{2}\Big)\Big\|.
\end{align*}
By the definition of $\rho_-$,
\begin{align*}
\lim_{t\rightarrow0^{-}}\frac{\left\|T\left(\frac{x+y}{2}\right)+tT\left(\frac{x-y}{2}\right)\right\|-\left\|T\left(\frac{x+y}{2}\right)\right\|}{t}
<\gamma\Big\|T\Big(\frac{x+y}{2}\Big)\Big\|\,\Big\|T\Big(\frac{x-y}{2}\Big)\Big\|.
\end{align*}
It follows that there exists $\delta_1<0$ such that
\begin{align*}
\frac{\left\|T\left(\frac{x+y}{2}\right)+tT\left(\frac{x-y}{2}\right)\right\|-\left\|T\left(\frac{x+y}{2}\right)\right\|}{t}
<\gamma\Big\|T\Big(\frac{x+y}{2}\Big)\Big\|\,\Big\|T\Big(\frac{x-y}{2}\Big)\Big\|
\end{align*}
for all $t\in[\delta_1,0)$. Thus
\begin{align*}
\Big\|T\Big(\frac{x+y}{2}\Big)\Big\|<\Big\|T\Big(\frac{x+y}{2}\Big)+tT\Big(\frac{x-y}{2}\Big)\Big\|
+\gamma\Big\|T\Big(\frac{x+y}{2}\Big)\Big\|\,\Big\|T\Big(t\frac{x-y}{2}\Big)\Big\|
\end{align*}
for all $t\in[\delta_1,0)$.
Since $$0=\rho_-\Big(T\Big(\frac{x+y}{2}\Big),T\Big(\frac{x-y}{2}\Big)\Big)\leq\rho_+\Big(T\Big(\frac{x+y}{2}\Big),T\Big(\frac{x-y}{2}\Big)\Big),$$
we have
\begin{align*}
-\gamma\Big\|T\Big(\frac{x+y}{2}\Big)\Big\|\,\Big\|T\Big(\frac{x-y}{2}\Big)\Big\|
<\rho_+\Big(T\Big(\frac{x+y}{2}\Big),T\Big(\frac{x-y}{2}\Big)\Big).
\end{align*}
By the definition of $\rho_+$,
\begin{align*}
-\gamma\Big\|T\Big(\frac{x+y}{2}\Big)\Big\|\,\Big\|T\Big(\frac{x-y}{2}\Big)\Big\|
<\lim_{t\rightarrow0^+}\frac{\left\|T\left(\frac{x+y}{2}\right)+tT\left(\frac{x-y}{2}\right)\right\|-\left\|T\left(\frac{x+y}{2}\right)\right\|}{t}.
\end{align*}
Consequently, there exists $\delta_2>0$ such that
\begin{align*}
\Big\|T\Big(\frac{x+y}{2}\Big)\Big\|<\Big\|T\Big(\frac{x+y}{2}\Big)+tT\Big(\frac{x-y}{2}\Big)\Big\|
+\gamma\Big\|T\Big(\frac{x+y}{2}\Big)\Big\|\,\Big\|T\Big(t\frac{x-y}{2}\Big)\Big\|
\end{align*}
for all $t\in(0,\delta_2]$. Let us define $\varphi:\mathbb{R}\rightarrow\mathbb{R}$ by
\begin{align*}
\varphi(t):=\Big\|T\Big(\frac{x+y}{2}\Big)+tT\Big(\frac{x-y}{2}\Big)\Big\|
+\gamma\Big\|T\Big(\frac{x+y}{2}\Big)\Big\|\,\Big\|T\Big(t\frac{x-y}{2}\Big)\Big\|.
\end{align*}
Then $\varphi$ is convex and $\varphi(t)>\varphi(0)$ for all $t\in[\delta_1,0)\cup(0,\delta_2]$, which yields that $\varphi(t)>\varphi(0)$ for all $t\in\mathbb{R}\setminus\{0\}$. Therefore,
\begin{align*}
\Big\|T\Big(\frac{x+y}{2}\Big)\Big\|<\Big\|T\Big(\frac{x+y}{2}\Big)+T\Big(\frac{x-y}{2}\Big)\Big\|
+\gamma\Big\|T\Big(\frac{x+y}{2}\Big)\Big\|\,\Big\|T\Big(\frac{x+y}{2}\Big)\Big\|.
\end{align*}
Finally if $\gamma\rightarrow0^+$, then
\begin{align*}
\Big\|T\Big(\frac{x+y}{2}\Big)\Big\|<\|Tx\|.
\end{align*}
It follows that $\|Tx+Ty\|<2\|Tx\|$. Therefore, $\|Ty\|-\|Tx\|\leq\|Tx+Ty\|<2\|Tx\|$ from which we get $\|Ty\|<3\|Tx\|$.
In the case when $T(\frac{x+y}{2})\perp_{\rho_+}T(\frac{x-y}{2})$, our claim can be established by a similar argument.
\end{proof}

\begin{corollary}\label{pr.1.2}
Let $X$ be a normed space endowed with two norms ${\|\cdot\|}_1$ and ${\|\cdot\|}_2$. Then the following conditions are equivalent:
\begin{itemize}
\item[(i)] $\frac{\rho_{*, 1}(x, y)}{\|x\|^4_1} = \frac{\rho_{*, 2}(x, y)}{\|x\|^4_2} \quad (x, y\in X)$.
\item[(ii)] There exist $m, M>0$ such that $$m|\rho_{*, 1}(x, y)| \leq |\rho_{*, 2}(x, y)| \leq M |\rho_{*, 1}(x, y)| \qquad (x, y\in X).$$
\item[(iii)] There exists $M>0$ such that $|\rho_{*, 2}(x, y)| \leq M |\rho_{*, 1}(x, y)| \quad (x, y\in X)$.
\item[(iv)] There exists $m>0$ such that $m|\rho_{*, 1}(x, y)| \leq |\rho_{*, 2}(x, y)| \quad (x, y\in X)$.
\item[(v)] There exist $m, M\in\mathbb{R}$ such that $$m\rho_{*, 1}(x, y) \leq \rho_{*, 2}(x, y) \leq M \rho_{*, 1}(x, y) \qquad (x, y\in X).$$
\item[(vi)] There exists $M>0$ such that $|\rho_{*, 2}(x, y)| = M |\rho_{*, 1}(x, y)| \quad (x, y\in X)$.
\item[(vii)] There exists $M>0$ such that $\rho_{*, 2}(x, y) = M \rho_{*, 1}(x, y) \quad (x, y\in X)$.
\item[(viii)] There exists $M>0$ such that ${\|x\|}_2 = M {\|x\|}_1 \quad (x\in X)$.
\end{itemize}
\end{corollary}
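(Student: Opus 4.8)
The plan is to treat (viii) as a hub: I would prove that (viii) implies every other assertion by a one-line computation, and that every other assertion implies (viii). For the first direction, suppose $\|x\|_2 = M\|x\|_1$ for all $x$. Directly from the definition of the norm derivatives (the factor $M^2$ pulls out of the difference quotient, and $M>0$ preserves the one-sided limits) one gets $\rho_{\pm, 2}(x,y) = M^2 \rho_{\pm, 1}(x,y)$, hence $\rho_{*, 2}(x,y) = M^4 \rho_{*, 1}(x,y)$ and $\|x\|_2^4 = M^4\|x\|_1^4$. Substituting this, each of (i)--(vii) is immediate: (i) reads $\rho_{*,1}(x,y)/\|x\|_1^4 = \rho_{*,2}(x,y)/\|x\|_2^4$ after cancelling $M^4$, (vii) holds with constant $M^4$, and (ii), (v), (vi) follow with $m = M = M^4$.

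For the converse, the unifying observation is that each of (i)--(v) encodes an inclusion between the two $\rho_*$-orthogonality relations. Since $\|x\|_j^4 > 0$, condition (i) gives $\rho_{*,1}(x,y) = 0 \Leftrightarrow \rho_{*,2}(x,y) = 0$, and (ii) gives both implications as well; conditions (iii) and (v) give $\rho_{*,1}(x,y) = 0 \Rightarrow \rho_{*,2}(x,y) = 0$ (for (v), because $\rho_{*,1} = 0$ squeezes $0 \leq \rho_{*,2} \leq 0$), that is, $\perp_{\rho_*, 1} \subseteq \perp_{\rho_*, 2}$; while (iv) gives the reverse inclusion $\perp_{\rho_*, 2} \subseteq \perp_{\rho_*, 1}$. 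The decisive step is to read such an inclusion as an orthogonality-preservation statement for the identity operator: regarding $\mathrm{id}$ as a nonzero linear map $(X, \|\cdot\|_1) \longrightarrow (X, \|\cdot\|_2)$, the inclusion $\perp_{\rho_*, 1} \subseteq \perp_{\rho_*, 2}$ is precisely hypothesis (i) of Theorem \ref{L1} for this map. The equivalence (i)$\Leftrightarrow$(iii) of Theorem \ref{L1} then yields $\|x\|_2 = \|\mathrm{id}\|\,\|x\|_1$ for all $x$, which is exactly (viii); for (iv) I would argue symmetrically with $\mathrm{id}\,:(X, \|\cdot\|_2) \longrightarrow (X, \|\cdot\|_1)$.

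It remains to bring (vi) and (vii) back to (viii), and here Theorem \ref{L1} is not even needed: putting $y = x$ and using $\rho_{*, j}(x, x) = \|x\|_j^4$ turns both (vii) and (vi) into $\|x\|_2^4 = M\|x\|_1^4$, whence $\|x\|_2 = M^{1/4}\|x\|_1$ and (viii) holds. (Alternatively one could route (vi) and (vii) through the inclusion argument, since each forces $\perp_{\rho_*, 1} = \perp_{\rho_*, 2}$.) Assembling these implications closes the loop and establishes the equivalence of all eight conditions.

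The proof carries no serious analytic obstacle, because the one substantial ingredient---that an inclusion of $\rho_*$-orthogonality relations forces the norms to be proportional---is delivered verbatim by Theorem \ref{L1} applied to the identity map. The step I expect to require the most care is purely bookkeeping: correctly reducing the various inequalities---especially the sign-sensitive two-sided inequality in (v) and the absolute-value inequalities in (ii)--(iv)---to the clean dichotomy $\rho_{*,2}(x,y) = 0 \Leftrightarrow \rho_{*,1}(x,y) = 0$ or its one-sided form, and keeping track of which direction of the inclusion each condition produces so that Theorem \ref{L1} is applied to the correctly oriented identity map. Once that is done, everything collapses to Theorem \ref{L1} together with the elementary scaling identity $\rho_{*,2} = M^4 \rho_{*,1}$.
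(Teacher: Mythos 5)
Your proposal is correct and takes essentially the same approach as the paper: the paper likewise reads condition (i) as saying that the identity map $id\colon (X,\|\cdot\|_1)\longrightarrow (X,\|\cdot\|_2)$ preserves $\rho_*$-orthogonality, invokes Theorem \ref{L1} to obtain $\|x\|_2=M\|x\|_1$, and dismisses the remaining implications as ``similar.'' Your write-up simply supplies the bookkeeping the paper omits---the scaling identity $\rho_{*,2}=M^4\rho_{*,1}$ for the direction (viii)$\Rightarrow$(i)--(vii), the correctly oriented identity map for (iv), and the $y=x$ shortcut for (vi) and (vii)---all of which is sound.
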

\begin{proof}
(i)$\Rightarrow$(viii): Assume that $T=id:(X, {\|\cdot\|}_1)\rightarrow(X, {\|\cdot\|}_2)$ is the identity map. Let $x,y\in X$ and $\rho_{*,1}(x,y)=0$. Then (i) implies $\rho_{*,2}(Tx,Ty)=\rho_{*,2}(x,y)=0$. Therefore, Theorem \ref{L1} follows that there exists $M>0$ such that ${\|x\|}_2={\|Tx\|}_2=M{\|x\|}_1$. The other implications can be proved similarly.
\end{proof}
Let us adopt the notion of Birkhoff orthogonal set of $x$ from \cite{A.S.T}:
$$[x]^B_{\|\cdot\|} = \{y\in X \,: \,\,x\perp_{B}y\}.$$
We now define the $\diamondsuit$-orthogonal set of $x$ as follows:
$$[x]^{\diamondsuit}_{\|\cdot\|} = \{y\in X \,: \,\,x\perp_{\diamondsuit}y\},$$
where $\diamondsuit \in \{I, \rho_{+}, \rho_{-}, \rho, \rho_* \}$.
\begin{corollary}
Let $X$ be a normed space endowed with two norms ${\|\cdot\|}_1$ and ${\|\cdot\|}_2$. For every $x\in X$, the following conditions are equivalent:
\begin{align*}
&{\rm(i)}\,[x]^{\rho_*}_{{\|\cdot\|}_1} = [x]^{\rho_*}_{{\|\cdot\|}_2}.\qquad \quad{\rm(ii)}\,[x]^{B}_{{\|\cdot\|}_1} = [x]^{B}_{{\|\cdot\|}_2}.\\
&{\rm (iii)}\,[x]^{I}_{{\|\cdot\|}_1} = [x]^{I}_{{\|\cdot\|}_2}.\qquad \,\,\,{\rm (iv)}\,[x]^{\rho}_{{\|\cdot\|}_1} = [x]^{\rho}_{{\|\cdot\|}_2}.\\
&{\rm (v)}\,[x]^{\rho_{+}}_{{\|\cdot\|}_1} = [x]^{\rho_{+}}_{{\|\cdot\|}_2}.\qquad \,\,\,\,\,{\rm (vi)}\,[x]^{\rho_{-}}_{{\|\cdot\|}_1} = [x]^{\rho_{-}}_{{\|\cdot\|}_2}.
\end{align*}
\end{corollary}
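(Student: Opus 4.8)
The plan is to fix $x\in X$ and reduce all six set equalities to the \emph{local first-order data of the two norms at $x$}, namely the one-sided norm derivatives. If $x=0$ every set equals $X$ and there is nothing to prove, so assume $x\neq0$. For each norm $\|\cdot\|_j$ I record the positively homogeneous functionals $a_j(y):=\rho_{-,j}(x,y)$ and $b_j(y):=\rho_{+,j}(x,y)$, which satisfy $a_j\le b_j$, $a_j(-y)=-b_j(y)$, $a_j(x)=b_j(x)=\|x\|_j^2>0$, with $b_j$ convex and $a_j$ concave (hence continuous) in $y$. In these terms $[x]^{\rho_+}_j=\{b_j=0\}$, $[x]^{\rho_-}_j=\{a_j=0\}$, $[x]^{\rho_*}_j=\{a_jb_j=0\}$, $[x]^{\rho}_j=\{a_j+b_j=0\}$, while $[x]^{B}_j=\{a_j\le0\le b_j\}$ by the Birkhoff characterization $0\in[\rho_-,\rho_+]$ recorded in \cite[Proposition 2.1.7]{A.S.T}.

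First I would treat the cluster $\{\rho_+,\rho_-,\rho_*,B\}$ through the closed convex cone $C_j:=\{y:b_j(y)\le0\}$. Since $b_j(-x)=-\|x\|_j^2<0$ and $b_j(x)>0$, the reference point $-x$ lies in $\mathrm{int}\,C_j$, the cone is proper, and a separating hyperplane keeps $C_j$ and $-C_j$ apart. The set identities $[x]^{\rho_-}_j=-[x]^{\rho_+}_j$, $[x]^{\rho_*}_j=[x]^{\rho_+}_j\cup(-[x]^{\rho_+}_j)$, $[x]^{B}_j=X\setminus\bigl(\mathrm{int}\,C_j\cup\mathrm{int}(-C_j)\bigr)$, together with $\partial C_j=\{b_j=0\}=[x]^{\rho_+}_j$, show that each of these four sets is built from $C_j$, and conversely $C_j$ is recovered as the closure of the connected component of the complement (of any of these sets) that contains the distinguished point $-x$. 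Hence each equality $[x]^{\diamondsuit}_1=[x]^{\diamondsuit}_2$ for $\diamondsuit\in\{\rho_+,\rho_-,\rho_*,B\}$ is equivalent to $C_1=C_2$, giving the mutual equivalence of (i), (ii), (v), (vi).

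Next I would bring in $\rho$ and $I$. For the easy directions: when $C_1=C_2$ the cone determines the support set $J_j(x)$ up to the scalar $\|x\|_j$, so $a_1+b_1$ and $a_2+b_2$ are positive multiples of one another and have the same zero set, yielding $[x]^{\rho}_1=[x]^{\rho}_2$. For isosceles orthogonality I would use the first-order expansion $\|x+ty\|_j-\|x-ty\|_j=\frac{t}{\|x\|_j}\bigl(a_j(y)+b_j(y)\bigr)+o(t)$ as $t\to0^+$, which identifies the germ of $[x]^{I}_j$ at the origin with $[x]^{\rho}_j$; since $[x]^{\rho}_j$ is a cone, agreement of the two isosceles sets near $0$ propagates to equality of the $\rho$-sets everywhere, so $[x]^{I}_1=[x]^{I}_2$ forces $[x]^{\rho}_1=[x]^{\rho}_2$. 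To try to close the loop for $\rho$ I would pass to each plane $P=\mathrm{span}\{x,y\}$, where the extreme normalized support functionals $g_1^P,g_2^P$ (fixed by $g^P(x)=1$) give $a_j+b_j=\|x\|_j^2(g_1^P+g_2^P)$; the normalization $g^P(x)=1$ rigidly pins their relative scale, so matching the line $\ker(g_1^P+g_2^P)$ across all $P$ is the mechanism by which one hopes to recover $\partial C_j$.

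The step I expect to be the main obstacle is exactly the reverse passage from the ``sum/bisector'' data back to the full cone, i.e. the implications that must produce the $\rho_*$-, Birkhoff-, and especially the isosceles sets. Unlike $\perp_{\rho_\pm}$, $\perp_{\rho_*}$, $\perp_\rho$ and $\perp_B$, which are infinitesimal tangent-cone conditions at $x$, isosceles orthogonality is a genuine finite-difference condition encoding the \emph{global} shape of the unit ball, and in non-smooth directions its orthogonality set is strictly larger than the cone $[x]^\rho$; likewise a single line $\ker(g_1^P+g_2^P)$ per plane carries strictly less information than the pair $\{\ker g_1^P,\ker g_2^P\}$ that determines $\partial C_j$. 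Overcoming this gap is the crux: my plan is to combine the homogeneity of the Birkhoff and $\rho$-sets, the germ computation above, the normalization rigidity $g^P(x)=1$, and the convexity/monotonicity of the one-variable functions $t\mapsto\|x\pm ty\|_j$ to constrain the global data by the local one. I would expect this finite-difference-to-infinitesimal reconstruction to be where the real work lies, and where the hypotheses on $x$ and on the interplay of the two norms must be used most carefully.
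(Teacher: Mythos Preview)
You have read the corollary as a pointwise statement (fix a single $x$ and show that the six set equalities at that $x$ are equivalent), but the paper intends---and its proof only supports---the global reading: each of (i)--(vi) is the assertion that the corresponding equality holds for \emph{every} $x\in X$, and it is these six global assertions that are claimed equivalent. With that reading the argument is a one-liner built on the machinery already assembled. If, say, (i) holds for all $x$, then the identity map $T=\mathrm{id}\colon(X,\|\cdot\|_1)\to(X,\|\cdot\|_2)$ preserves $\rho_*$-orthogonality, so by Theorem~\ref{L1} there exists $M>0$ with $\|x\|_2=M\|x\|_1$ for all $x$, and proportional norms trivially share every orthogonal set. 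The remaining directions invoke the analogous ``preserver $\Rightarrow$ similarity'' theorems cited in the introduction for $\perp_B$ (Blanco--Turn\v{s}ek), $\perp_I$ (Martini--Wu), and $\perp_\rho,\perp_{\rho_\pm}$ (Chmieli\'nski--W\'ojcik). No local analysis of cones or support functionals enters.

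The pointwise statement you set out to prove is in fact false, so the obstacle you flag is fatal rather than merely technical. For (i)$\Rightarrow$(iii) at a fixed $x$: let $\|\cdot\|_1$ be the Euclidean norm on $\mathbb{R}^2$ and let $\|\cdot\|_2$ have unit ball equal to the Euclidean disc with a small smooth outward bump near $\tfrac{1}{\sqrt2}(1,1)$ (and, by central symmetry, near its antipode), leaving a neighbourhood of $\pm e_1$ untouched. Both norms are then smooth at $x=e_1$ with the same supporting line, so $C_1=C_2$ and $[e_1]^{\diamondsuit}_1=[e_1]^{\diamondsuit}_2$ for every $\diamondsuit\in\{B,\rho_*,\rho_\pm,\rho\}$; yet $\|e_1+e_2\|_2<\|e_1-e_2\|_2$ while $\|e_1+e_2\|_1=\|e_1-e_2\|_1$, so $e_2\in[e_1]^I_1\setminus[e_1]^I_2$. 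For (iv)$\Rightarrow$(ii) at a fixed $x$: take $\|\cdot\|_1$ the $\ell^1$ norm and $\|\cdot\|_2$ the regular-hexagon norm on $\mathbb{R}^2$, both with a vertex of the unit ball at $e_1$; the symmetry $(u,v)\mapsto(u,-v)$ forces $[e_1]^\rho_1=[e_1]^\rho_2=\mathbb{R}e_2$, but the Birkhoff cones $\{|s|\le|t|\}$ and $\{|s|\le|t|/\sqrt3\}$ differ. Your cone argument does give the mutual equivalence of (i), (ii), (v), (vi) at a single $x$, but it cannot be completed to include (iii) or (iv); the corollary is genuinely a statement about all $x$ at once, and the route through similarities is what makes it work.
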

\begin{proof}
(i)$\Rightarrow$(ii): Suppose that (i) holds and define $T=id:(X,{\|\cdot\|}_1)\rightarrow(X,{\|\cdot\|}_2)$ to be the identity map. Then $T$ is $\rho_*$-orthogonal preserving. It follows from Theorem \ref{L1} that there exists $M>0$ such that ${\|Tx\|}_2={\|x\|}_2=M{\|x\|}_1$, which implies that $[x]^{B}_{{\|\cdot\|}_1} = [x]^{B}_{{\|\cdot\|}_2}\, (x\in X)$.

(ii)$\Rightarrow$(i): If (ii) holds, then $T=id:(X,{\|\cdot\|}_1)\rightarrow(X,{\|\cdot\|}_2)$ is $B$-orthogonal preserving. It follows from \cite[Theorem 3.1]{B.T} that there exists $M>0$ such that ${\|x\|}_2={\|Tx\|}_2=M{\|x\|}_1$, which ensures that $[x]^{\rho_*}_{{\|\cdot\|}_1} = [x]^{\rho_*}_{{\|\cdot\|}_2}
\, (x\in X)$. The other implications can be proved similarly.
\end{proof}


\section{Characterizations of smooth spaces}
The relations $\perp_{\rho}$ and $\perp_{\rho_*}$ are generally incomparable. Then the following results give some characterizations of the smooth normed spaces.
\begin{theorem}\label{Th1}
Let $(X,\|\cdot\|)$ be a normed space. The following conditions are equivalent:
\begin{align*}
&{\rm (i)}\perp_{B}\subseteq\perp_{\rho_*}.\qquad {\rm (ii)}\perp_{B} = \perp_{\rho_*}.\qquad {\rm (iii)} \perp_{\rho}\subseteq\perp_{\rho_*}.\\
&{\rm (iv)}\perp_{\rho_*}\subseteq\perp_{\rho}.\quad \,\,\,\,\,{\rm (v)} \perp_{\rho_*} = \perp_{\rho}.\qquad {\rm (vi)} \perp_{\rho_*}\subseteq\perp_{\rho_+}.\\
&{\rm (vii)} \perp_{\rho_*}\subseteq\perp_{\rho_-}.\,\,\,\,\,\, {\rm (viii)} \perp_{\rho_*} = \perp_{\rho_-}.\,\,\,\,\, {\rm (ix)}\mbox{ $X$ is smooth}.
\end{align*}
\end{theorem}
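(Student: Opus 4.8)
The plan is to treat condition (ix) as the hub: I would show that smoothness implies every one of the eight orthogonality statements, and conversely that each of the five genuine inclusions (i), (iii), (iv), (vi), (vii) forces smoothness. The three equalities (ii), (v), (viii) then come for free, since an equality trivially entails its one-sided inclusion, namely (ii)$\Rightarrow$(i), (v)$\Rightarrow$(iv), (viii)$\Rightarrow$(vii). The computational engine throughout is the translation identity $\rho_{\pm}(x, tx + y) = t\|x\|^2 + \rho_{\pm}(x, y)$ already used in Proposition \ref{pr1}(iv), together with three standard facts: $\rho_-(x,y) \le \rho_+(x,y)$ (convexity of $t\mapsto\|x+ty\|$), the Birkhoff characterization $x \perp_B y \Leftrightarrow \rho_-(x,y) \le 0 \le \rho_+(x,y)$, and the characterization of smoothness as $\rho_-(x,y) = \rho_+(x,y)$ for all $x, y$.

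For the direction (ix)$\Rightarrow$everything, I would write $\rho := \rho_- = \rho_+$. Then $\rho_*(x,y) = \rho(x,y)^2$, so $x \perp_{\rho_*} y \Leftrightarrow \rho(x,y) = 0 \Leftrightarrow x \perp_{\rho} y$, and likewise $\perp_{\rho_*}$ coincides with $\perp_{\rho_-}$ and $\perp_{\rho_+}$. Moreover $x \perp_B y \Leftrightarrow \rho_-(x,y) \le 0 \le \rho_+(x,y) \Leftrightarrow \rho(x,y) = 0$, so $\perp_B$ joins the collapse as well. Hence all of (i)--(viii) hold, in fact as equalities.

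The heart of the argument is the converse implications, which follow one template: given $x \neq 0$ and $y$, shift $y$ to $z = tx + y$ with $t$ chosen so that the weaker orthogonality on the left of the assumed inclusion holds, apply the hypothesis, and read off $\rho_-(x,y) = \rho_+(x,y)$ from the translation identity. Concretely, for (vi) (resp.\ (vii)) I take $t = -\rho_-(x,y)/\|x\|^2$ (resp.\ $t = -\rho_+(x,y)/\|x\|^2$) so that $x \perp_{\rho_-} z$ (resp.\ $x \perp_{\rho_+} z$), hence $x \perp_{\rho_*} z$ since $\perp_{\rho_-}\cup\perp_{\rho_+} = \perp_{\rho_*}$; the inclusion then gives $\rho_+(x,z) = 0$ (resp.\ $\rho_-(x,z) = 0$), which after translation reads $\rho_+(x,y) = \rho_-(x,y)$. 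The same shift $t = -\rho_-(x,y)/\|x\|^2$ handles (iv): there $x \perp_{\rho_*} z$ forces $\rho(x,z) = 0$, hence $\rho_+(x,z) = 0$, again giving $\rho_- = \rho_+$. For (iii) I instead set $t = -\rho(x,y)/\|x\|^2$ so that $x \perp_{\rho} z$; the hypothesis gives $\rho_*(x,z) = 0$, and evaluating with Proposition \ref{pr1}(iv) yields $\rho_*(x,y) = \rho(x,y)^2$, i.e.\ $\rho_-\rho_+ = \tfrac14(\rho_- + \rho_+)^2$, which simplifies to $(\rho_- - \rho_+)^2 = 0$.

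The one case needing an extra idea, and the step I expect to be the main obstacle, is (i)$\Rightarrow$(ix), because Birkhoff--James orthogonality is thick rather than an equation: fixing $x \neq 0$ and $y$, the vectors $z = tx + y$ satisfy $x \perp_B z$ for every $t$ in the whole interval $-\rho_+(x,y)/\|x\|^2 \le t \le -\rho_-(x,y)/\|x\|^2$ (nonempty since $\rho_- \le \rho_+$). The inclusion (i) then forces the quadratic $t \mapsto \rho_-(x,z)\rho_+(x,z) = (t\|x\|^2 + \rho_-(x,y))(t\|x\|^2 + \rho_+(x,y))$ to vanish identically on that interval; a nonzero quadratic has at most two roots, so the interval must degenerate to a point, whence $\rho_-(x,y) = \rho_+(x,y)$. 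Finally I would record that the degenerate case $x = 0$ is automatic since all $\rho$-quantities then vanish, and assemble the trivial implications (ii)$\Rightarrow$(i), (v)$\Rightarrow$(iv), (viii)$\Rightarrow$(vii) to close the full web of equivalences.
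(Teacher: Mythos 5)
Your proposal is correct and follows essentially the same route as the paper: smoothness (ix) serves as the hub, the translation identity $\rho_{\pm}(x,tx+y)=t\|x\|^2+\rho_{\pm}(x,y)$ drives all the shift arguments, and your choices of shifts for (iii), (iv), (vi), (vii) match the paper's (up to interchanging the roles of $\rho_-$ and $\rho_+$). The only cosmetic difference is in (i)$\Rightarrow$(ix): where you argue that a nonzero quadratic in $t$ cannot vanish on the nondegenerate interval of Birkhoff shifts, the paper parametrizes that same interval by $\alpha\in[0,1]$ and computes $\rho_*(x,tx+y)=\alpha(\alpha-1)\bigl(\rho_-(x,y)-\rho_+(x,y)\bigr)^2$ explicitly --- the same idea in computational form.
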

\begin{proof}
It is well known that $X$ is smooth if and only if $\rho_-(x,y)=\rho_+(x,y)$ for every $x,y\in X$; see e.g., \cite[Remark 2.1.1]{A.S.T}.

First, we prove (i)$\Leftrightarrow$(ix). Suppose that $X$ is smooth and $x,y\in X$ such that $x\perp_{B}y$. Then \cite[Proposition 2.2.2]{A.S.T} ensures that $x\perp_{\rho_+}y$ and this yields that $\rho_*(x,y)=0$. Now, assume that (i) holds and $x,y\in X$ with $x\neq0$. It is clear that
\begin{align*}
\rho_-(x,y)\leq\frac{\alpha\rho_-(x,y)+(1-\alpha)\rho_+(x,y)}{\|x\|^2}\|x\|^2\leq\rho_+(x,y)
\end{align*}
for every $\alpha\in[0,1]$. It follows from \cite[Proposition 2.1.7]{A.S.T} that $x\perp_{B}tx+y$ with $t=\frac{-\alpha\rho_-(x,y)-(1-\alpha)\rho_+(x,y)}{\|x\|^2}$. By the assumption, we get $x\perp_{\rho_*}tx+y$. Hence $\rho_*(x,x+ty)=0$. On the other hand,
\begin{align*}
\rho_*(x,tx+y)&=t^2\|x\|^4+t\|x\|^2\Big(\rho_-(x,y)+\rho_+(x,y)\Big)+\rho_*(x,y)\\
&=\Big(-\alpha\rho_-(x,y)-(1-\alpha)\rho_+(x,y)\Big)^2\\
&\quad+\Big(-\alpha\rho_-(x,y)-(1-\alpha)\rho_+(x,y)\Big)\Big(\rho_-(x,y)+\rho_+(x,y)\Big)\\
&\quad+\rho_-(x,y)\rho_+(x,y)\\
&=\alpha(\alpha-1)\Big(\rho_-(x,y)-\rho_+(x,y)\Big)^2.
\end{align*}
Therefore, $\rho_-(x,y)=\rho_+(x,y)$ ensures that $X$ is smooth.

Now, we prove (iii)$\Leftrightarrow$(ix). Let $x,y\in X$. Clearly $x\perp_{\rho}\left(-\frac{\rho(x,y)}{\|x\|^2}x+y\right)$. It follows from (iii) that $x\perp_{\rho_*}\left(-\frac{\rho(x,y)}{\|x\|^2}x+y\right)$. Hence
\begin{align*}
\rho_*\left(x,-\frac{\rho(x,y)}{\|x\|^2}x+y\right)&=\frac{\rho^2(x,y)}{\|x\|^4}\|x\|^4-2\frac{\rho^2(x,y)}{\|x\|^2}\|x\|^2+\rho_*(x,y)\\
&=\rho_*(x,y)-\rho^2(x,y)=0.
\end{align*}
Thus $\rho^2(x,y)=\rho_*(x,y)=\rho_-(x,y)\rho_+(x,y)$. Hence
\begin{align*}
\Big(\rho_-(x,y)+\rho_+(x,y)\Big)^2=4\rho_-(x,y)\rho_+(x,y).
\end{align*}
Therefore, $\Big(\rho_-(x,y)-\rho_+(x,y)\Big)^2=0$. Finally, we get $\rho_-(x,y)=\rho_+(x,y)$. It ensures that $X$ is smooth.

To show (iv)$\Leftrightarrow$(ix), let $x,y\in X$. From $x\perp_{\rho_+}\left(-\frac{\rho_+(x,y)}{\|x\|^2}x+y\right)$ we deduce that $x\perp_{\rho_*}\left(-\frac{\rho_+(x,y)}{\|x\|^2}x+y\right)$. It follows from (iv) that $x\perp_{\rho}\left(-\frac{\rho_+(x,y)}{\|x\|^2}x+y\right)$. Hence
\begin{align*}
\rho\left(x,-\frac{\rho_+(x,y)}{\|x\|^2}x+y\right)=-\frac{\rho_+(x,y)}{\|x\|^2}\|x\|^2+\rho(x,y)=\frac{\rho_-(x,y)-\rho_+(x,y)}{2}=0.
\end{align*}
Therefore, $\rho_-(x,y)=\rho_+(x,y)$, which follows that $X$ is smooth.

Finally, we prove (vi)$\Leftrightarrow$(ix). Let $x,y\in X$. Then
\begin{align*}
\rho_*\left(x,-\frac{\rho_-(x,y)}{\|x\|^2}x+y\right)&=\rho_-\left(x,-\frac{\rho_-(x,y)}{\|x\|^2}x+y\right)\rho_+\left(x,-\frac{\rho_-(x,y)}{\|x\|^2}x+y\right)\\
&=\left(-\frac{\rho_-(x,y)}{\|x\|^2}\|x\|^2+\rho_-(x,y)\right)\rho_+\left(x,-\frac{\rho_-(x,y)}{\|x\|^2}x+y\right)\\
&=\Big(\rho_-(x,y)-\rho_-(x,y)\Big)\rho_+\left(x,-\frac{\rho_-(x,y)}{\|x\|^2}x+y\right)=0.
\end{align*}
Hence $x\perp_{\rho_*}\left(-\frac{\rho_-(x,y)}{\|x\|^2}x+y\right)$. It follows from (vi) that $x\perp_{\rho_+}\left(-\frac{\rho_-(x,y)}{\|x\|^2}x+y\right)$. Therefore,
\begin{align*}
\rho_+\left(x,-\frac{\rho_-(x,y)}{\|x\|^2}x+y\right)=-\frac{\rho_-(x,y)}{\|x\|^2}\|x\|^2+\rho_+(x,y)=0,
\end{align*}
which yields that $\rho_-(x,y)=\rho_+(x,y)$. Thus $X$ is smooth.

The other implications can be shown similarly.
\end{proof}
In the following, we give some characterizations of smooth spaces in terms of $\rho_*$-orthogonal preserving mappings.
\begin{theorem}
Let $X$ be a normed space. The following assertions are equivalent:
\begin{itemize}
\item[(i)] $X$ is smooth.
\item[(ii)] There exist a normed space Y and a nonvanishing linear mapping
$T\,:X\longrightarrow Y$ such that
$x\perp_{\rho_*}y \Longrightarrow Tx\perp_{\rho_+}Ty \quad (x, y \in X)$.
\item[(iii)] There exist a normed space Y and a nonvanishing linear mapping
$T\,:X\longrightarrow Y$ such that
$x\perp_{\rho_*}y \Longrightarrow Tx\perp_{\rho_-}Ty \quad (x, y \in X).$
\end{itemize}
\end{theorem}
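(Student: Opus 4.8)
The plan is to prove the two equivalences $(\mathrm{i})\Leftrightarrow(\mathrm{ii})$ and $(\mathrm{i})\Leftrightarrow(\mathrm{iii})$; since interchanging the roles of $\rho_+$ and $\rho_-$ carries one into the other, I would carry out $(\mathrm{i})\Leftrightarrow(\mathrm{ii})$ in full detail and then note that $(\mathrm{i})\Leftrightarrow(\mathrm{iii})$ follows by the same argument with $\rho_-$ in place of $\rho_+$. Throughout I would use the characterization that $X$ is smooth if and only if $\rho_-(x,y)=\rho_+(x,y)$ for all $x,y\in X$ (see \cite[Remark 2.1.1]{A.S.T}), together with the translation identity $\rho_{\pm}(x,tx+y)=t\|x\|^2+\rho_{\pm}(x,y)$ used already in Proposition \ref{pr1}.

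For the easy direction $(\mathrm{i})\Rightarrow(\mathrm{ii})$, I would take $Y=X$ and $T=\mathrm{id}$. Since $X$ is smooth, $\rho_*(x,y)=\rho_-(x,y)\rho_+(x,y)=\rho_+(x,y)^2$, so $x\perp_{\rho_*}y$ forces $\rho_+(x,y)=0$, i.e.\ $x\perp_{\rho_+}y$. Thus the identity is a nonvanishing linear map realizing the required implication. The substance of the theorem is in $(\mathrm{ii})\Rightarrow(\mathrm{i})$. The key observation is that for every $x\neq0$ and every $y$, both vectors $u_{\pm}:=\frac{-\rho_{\pm}(x,y)}{\|x\|^2}x+y$ satisfy $x\perp_{\rho_*}u_{\pm}$, because $\rho_{\pm}(x,u_{\pm})=0$ annihilates the corresponding factor of $\rho_*$. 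Applying the hypothesis gives $Tx\perp_{\rho_+}Tu_{\pm}$, and since $Tu_{\pm}=\frac{-\rho_{\pm}(x,y)}{\|x\|^2}Tx+Ty$, expanding $\rho_+(Tx,Tu_{\pm})=0$ via the translation identity yields $\rho_+(Tx,Ty)=\frac{\|Tx\|^2}{\|x\|^2}\rho_{\pm}(x,y)$ for both signs. Subtracting the two relations gives $\|Tx\|^2\bigl(\rho_-(x,y)-\rho_+(x,y)\bigr)=0$, so smoothness at $x$ holds as soon as $Tx\neq0$.

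Hence the main obstacle is to rule out $Tx=0$ for $x\neq0$, i.e.\ to establish that $T$ is injective. Here I would adapt the injectivity argument appearing earlier in the paper: if $Tx_0=0$ with $\|x_0\|=1$, then for any unit vector $y$ the vectors $3x_0+y$ and $x_0$ are not Birkhoff--James orthogonal (indeed $\|3x_0+y\|\geq2>1=\|(3x_0+y)-3x_0\|$), and because $\perp_{\rho_+}\subseteq\perp_B$ this gives $\rho_+(3x_0+y,x_0)\neq0$. Choosing $c=\frac{-\rho_+(3x_0+y,x_0)}{\|3x_0+y\|^2}$ makes $\rho_*\bigl(3x_0+y,c(3x_0+y)+x_0\bigr)=0$; applying the hypothesis and using $T(3x_0+y)=Ty$ and $T\bigl(c(3x_0+y)+x_0\bigr)=cTy$ collapses the conclusion to $c\|Ty\|^2=0$, whence $Ty=0$. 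As $y$ was an arbitrary unit vector this forces $T=0$, contradicting that $T$ is nonvanishing.

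Thus $T$ is injective, so $Tx\neq0$ whenever $x\neq0$, and the computation of the second paragraph then yields $\rho_-(x,y)=\rho_+(x,y)$ for all $x,y\in X$, i.e.\ $X$ is smooth. Finally, $(\mathrm{i})\Leftrightarrow(\mathrm{iii})$ is obtained verbatim with $\rho_-$ replacing $\rho_+$ throughout, invoking $\perp_{\rho_-}\subseteq\perp_B$ in the injectivity step and choosing the scalar $c$ so as to kill the $\rho_-$-factor of $\rho_*$. I expect the injectivity step to be the only delicate point, since the rest is a direct manipulation of the two canonical $\rho_*$-orthogonal vectors.
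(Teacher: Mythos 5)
Your proposal is correct, and it reaches the hard implications by a genuinely different route than the paper. The paper proves the cycle (i)$\Rightarrow$(ii)$\Rightarrow$(iii)$\Rightarrow$(i): the step (ii)$\Rightarrow$(iii) uses the negation trick $\rho_*(x,-y)=\rho_*(x,y)$ together with $\rho_-(u,v)=-\rho_+(u,-v)$, and the substantive step (iii)$\Rightarrow$(i) derives exactly your two relations $\rho_+(Tx,Ty)=\frac{\|Tx\|^2}{\|x\|^2}\rho_-(x,y)$ and $\rho_-(Tx,Ty)=\frac{\|Tx\|^2}{\|x\|^2}\rho_+(x,y)$, but then \emph{multiplies} them to get $\rho_*(Tx,Ty)=\frac{\|Tx\|^4}{\|x\|^4}\rho_*(x,y)$, concludes that $T$ preserves $\rho_*$-orthogonality, invokes Theorem \ref{L1} to get $\|Tx\|=\|T\|\,\|x\|$ (which yields injectivity for free), and finally uses the external fact that a scalar multiple of an isometry scales $\rho_{\pm}$ by $\|T\|^2$ to conclude $\rho_-=\rho_+$. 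You instead \emph{subtract} the two relations, which collapses the problem to showing $Tx\neq 0$ for $x\neq 0$, and you supply the missing injectivity by adapting the ``$3x_0+y$'' argument that the paper uses elsewhere (in its theorem on proportional norms); your computation $\rho_+(Ty,cTy)=c\|Ty\|^2=0$ with $c\neq 0$ is correct, as is the reduction of (i)$\Leftrightarrow$(iii) to (i)$\Leftrightarrow$(ii) by the symmetry of all ingredients (the translation identity, $\perp_{\rho_-}\subseteq\perp_B$, and $\rho_-(u,cu)=c\|u\|^2$) under exchanging $\rho_+$ and $\rho_-$. What each approach buys: the paper's argument is shorter given the machinery of Theorem \ref{L1} and shows along the way the stronger conclusion that any $T$ as in (ii) or (iii) is necessarily a similarity; yours is more elementary and self-contained, using only the translation identity and first principles, at the price of carrying out the injectivity argument explicitly.
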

\begin{proof}
To prove (i)$\Rightarrow$(ii), let $X$ be smooth. By Theorem \ref{Th1}, $\perp_{\rho_*}=\perp_{\rho_+}$. Put $Y=X$ and $T=id$.

Now, we prove (ii)$\Rightarrow$(iii). By (ii), there exists a normed space Y and a nonvanishing linear mapping $T\,:X\longrightarrow Y$ such that
$x\perp_{\rho_*}y$ implies $Tx\perp_{\rho_+}Ty$ for all $x, y\in X$. Let $x,y\in X$ such that $x\perp_{\rho_*}y$. Then $\rho_*(x,-y)=\rho_*(x,y)=0$. It follows that $x\perp_{\rho_*}-y$. Hence $\rho_-(Tx,Ty)=-\rho_+(Tx,-Ty)=0$. Therefore, $Tx\perp_{\rho_+}Ty$.

To establish (iii)$\Rightarrow$(i), let $x,y\in X$ and $x\neq0$. Since $x\perp_{\rho_*}\frac{-\rho_-(x,y)}{\|x\|^2}x+y$, we have
\begin{align*}
\rho_+\left(Tx,\frac{-\rho_-(x,y)}{\|x\|^2}Tx+Ty\right)=\frac{-\rho_-(x,y)}{\|x\|^2}\|Tx\|^2+\rho_+(Tx,Ty)=0.
\end{align*}
Therefore,
\begin{align}\label{Eq1}
\rho_+(Tx,Ty)=\frac{\|Tx\|^2}{\|x\|^2}\rho_-(x,y).
\end{align}
In addition, we have $x\perp_{\rho_*}\frac{\rho_+(x,y)}{\|x\|^2}x-y$. It follows from (iii) that
\begin{align*}
\rho_+\left(Tx,\frac{\rho_+(x,y)}{\|x\|^2}Tx-Ty\right)=\frac{\rho_+(x,y)}{\|x\|^2}\|Tx\|^2-\rho_-(Tx,Ty)=0,
\end{align*}
whence
\begin{align}\label{Eq2}
\rho_-(Tx,Ty)=\frac{\|Tx\|^2}{\|x\|^2}\rho_+(x,y).
\end{align}
 It follows from \eqref{Eq1} and \eqref{Eq2} that
 \begin{align*}
 \rho_*(Tx,Ty)=\frac{\|Tx\|^4}{\|x\|^4}\rho_*(x,y).
 \end{align*}
 Hence $T$ is $\rho_*$-orthogonal preserving mapping. An application of Theorem \ref{L1} yields that $\|Tx\|=\|T\|\,\|x\|$ for all $x\in X$. Consequently, using \cite[Theorem 4.3.1]{W} and \eqref{Eq1}, we have
 \begin{align*}
 \rho_+(x,y)=\frac{\rho_+(Tx,Ty)}{\|T\|^2}=\frac{\|T\|^2\rho_-(x,y)}{\|T\|^2}=\rho_-(x,y).
 \end{align*}
 Thus $X$ is smooth.
\end{proof}
\begin{theorem}
Let $X$ be a normed space. Then the following conditions are equivalent:
\begin{itemize}
\item[(i)] $X$ is smooth.
\item[(ii)] There exist a normed space Y and a nonvanishing linear mapping
$T: X\longrightarrow Y$ such that
$x\perp_{\rho_*}y \Longrightarrow Tx\perp_{\rho}Ty \quad (x, y \in X)$.
\item[(iii)] There exist a normed space Y and a nonvanishing linear mapping
$T: X\longrightarrow Y$ such that
$x\perp_{\rho}y \Longrightarrow Tx\perp_{\rho_*}Ty \quad (x, y \in X).$
\end{itemize}
\end{theorem}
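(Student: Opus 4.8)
The plan is to prove the two easy implications (i)$\Rightarrow$(ii) and (i)$\Rightarrow$(iii) directly, and then to close the circle by establishing (ii)$\Rightarrow$(i) and (iii)$\Rightarrow$(i); the substance lies in these last two, and (iii)$\Rightarrow$(i) is where I expect the main difficulty. For the easy directions, if $X$ is smooth then Theorem \ref{Th1} gives $\perp_{\rho_*}=\perp_{\rho}$, so the identity operator $\mathrm{id}:X\to X$ is a nonvanishing linear map for which both $x\perp_{\rho_*}y\Rightarrow x\perp_{\rho}y$ and $x\perp_{\rho}y\Rightarrow x\perp_{\rho_*}y$ hold trivially. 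Thus (i) implies (ii) and (iii) with $Y=X$ and $T=\mathrm{id}$.

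For (ii)$\Rightarrow$(i), fix $x\neq0$ and $y$. Using $\rho_{\pm}(x,tx+y)=t\|x\|^2+\rho_{\pm}(x,y)$, the vectors $-\frac{\rho_-(x,y)}{\|x\|^2}x+y$ and $-\frac{\rho_+(x,y)}{\|x\|^2}x+y$ are each $\rho_*$-orthogonal to $x$. Applying the hypothesis of (ii) together with $\rho(u,\alpha u+v)=\alpha\|u\|^2+\rho(u,v)$, I would obtain
\[
\rho(Tx,Ty)=\frac{\|Tx\|^2}{\|x\|^2}\rho_-(x,y)=\frac{\|Tx\|^2}{\|x\|^2}\rho_+(x,y),
\]
so $\rho_-(x,y)=\rho_+(x,y)$ whenever $Tx\neq0$. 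It then remains to check injectivity: if $Tx_0=0$ with $x_0\neq0$, then applying the hypothesis to $z$ and the $\rho_{\pm}$-perturbations above, now with $x_0$ in the second slot, forces $\rho_-(z,x_0)=\rho_+(z,x_0)=0$, hence $z\perp_B x_0$, for every $z$ with $Tz\neq0$; choosing $z=x_0+\varepsilon w$ with $Tw\neq0$ and taking $\lambda=-1$ in the Birkhoff--James inequality yields $\|x_0+\varepsilon w\|\le\varepsilon\|w\|$, which is absurd as $\varepsilon\to0$. Hence $Tx\neq0$ for all $x\neq0$, and $X$ is smooth.

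For (iii)$\Rightarrow$(i), I start from $x\perp_{\rho}\big(-\frac{\rho(x,y)}{\|x\|^2}x+y\big)$ for $x\neq0$, so the hypothesis gives $Tx\perp_{\rho_*}\big(-\frac{\rho(x,y)}{\|x\|^2}Tx+Ty\big)$. Expanding with Proposition \ref{pr1}(iv) and writing $c(x):=\frac{\|Tx\|^2}{\|x\|^2}$, the resulting quadratic factors as
\[
\big(\rho_-(Tx,Ty)-c(x)\rho(x,y)\big)\big(\rho_+(Tx,Ty)-c(x)\rho(x,y)\big)=0.
\]
So for each pair exactly one of $\rho_{\pm}(Tx,Ty)$ equals $c(x)\rho(x,y)$; this single dichotomy is weaker than the two relations available in (ii), and converting it into smoothness is the crux. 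Injectivity is obtained similarly: if $Tx_0=0$ with $x_0\neq0$, applying the hypothesis to $z$ and $-\frac{\rho(z,x_0)}{\|z\|^2}z+x_0$ forces $\rho(z,x_0)=0$ for every $z\notin\ker T$, and the choice $z=x_0+\varepsilon w$ with $Tw\neq0$, combined with $|\rho(z,w)|\le\|z\|\|w\|$, gives a contradiction; thus $c(x)>0$ for all $x\neq0$.

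To break the dichotomy I would exploit one-sided continuity of the norm derivatives, and this limiting step is the part I expect to require the most care. Applying the dichotomy to the perturbed pair $(x+sy,\,y)$ and letting $s\to0^{+}$, convexity of $s\mapsto\|Tx+sTy\|^2$ forces both $\rho_{-}(Tx+sTy,Ty)$ and $\rho_{+}(Tx+sTy,Ty)$ to converge to the common value $\rho_{+}(Tx,Ty)$, while $c(x+sy)\,\rho(x+sy,y)\to c(x)\,\rho_{+}(x,y)$; since for each $s$ one of the two branches holds and both branches share this limit, the $s$-dependent choice of sign is harmless and I get $\rho_{+}(Tx,Ty)=c(x)\rho_{+}(x,y)$, and the symmetric limit $s\to0^{-}$ gives $\rho_{-}(Tx,Ty)=c(x)\rho_{-}(x,y)$. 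Substituting these two scaling relations into the dichotomy at $(x,y)$ yields $\rho_{-}(x,y)=\rho(x,y)$ or $\rho_{+}(x,y)=\rho(x,y)$, either of which forces $\rho_{-}(x,y)=\rho_{+}(x,y)$ because $\rho$ is their average. As $x\neq0$ and $y$ are arbitrary, $X$ is smooth, completing the equivalences.
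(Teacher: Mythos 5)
Your proof is correct, and for the decisive implication (iii)$\Rightarrow$(i) it takes a genuinely different route from the paper's. The easy directions and your (ii)$\Rightarrow$(i) coincide with the paper's argument (the two $\rho_*$-orthogonal perturbations give $\rho(Tx,Ty)=\frac{\|Tx\|^2}{\|x\|^2}\rho_-(x,y)=\frac{\|Tx\|^2}{\|x\|^2}\rho_+(x,y)$), except that you additionally rule out a nontrivial kernel; the paper never addresses the case $Tx=0$, so unless one reads ``nonvanishing'' as injective, your extra step closes a small but real gap. For (iii)$\Rightarrow$(i), both you and the paper arrive at the same pointwise dichotomy
\begin{equation*}
\Bigl(\rho_-(Tx,Ty)-c(x)\rho(x,y)\Bigr)\Bigl(\rho_+(Tx,Ty)-c(x)\rho(x,y)\Bigr)=0,\qquad c(x)=\tfrac{\|Tx\|^2}{\|x\|^2},
\end{equation*}
but then diverge: the paper promotes this disjunction to the global statement ``$x\perp_\rho y\Rightarrow Tx\perp_{\rho_-}Ty$ \emph{or} $x\perp_\rho y\Rightarrow Tx\perp_{\rho_+}Ty$'' --- a quantifier exchange that is left unjustified, since the branch may a priori depend on $(x,y)$ --- and then invokes \cite[Theorem 5.1]{W} as a black box. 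You instead resolve the dichotomy intrinsically: applying it at $(x+sy,y)$ and using the classical fact that for a convex function such as $f(s)=\|Tx+sTy\|$ (and likewise $s\mapsto\|x+sy\|$) both one-sided derivatives $f'_{\pm}(s)$ tend to $f'_{+}(0)$ as $s\to0^{+}$ and to $f'_{-}(0)$ as $s\to0^{-}$, both factors converge to the same quantity; since their product vanishes identically, its limit $\bigl(\rho_{+}(Tx,Ty)-c(x)\rho_{+}(x,y)\bigr)^2$ must vanish --- this observation makes your remark that ``the $s$-dependent choice of sign is harmless'' fully rigorous without even extracting subsequences. The resulting scaling relations $\rho_{\pm}(Tx,Ty)=c(x)\rho_{\pm}(x,y)$ turn the dichotomy at $(x,y)$ into $-\frac{1}{4}\bigl(\rho_{+}(x,y)-\rho_{-}(x,y)\bigr)^2=0$, as you say. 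What each approach buys: the paper's proof is three lines long but rests on an external theorem of W\'ojcik and on the unproved uniformization of the disjunction; yours is self-contained, repairs that logical gap, and needs only the standard one-sided continuity of derivatives of convex functions, which should be stated (or cited) explicitly in a final write-up. One wording slip: the dichotomy gives ``at least one'' factor zero, not ``exactly one.''
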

\begin{proof}
First, we prove (i)$\Rightarrow$(ii) and (i)$\Rightarrow$(iii). If $X$ is smooth, then Theorem \ref{Th1} shows that $\perp_{\rho}=\perp_{\rho_*}$. So it is enough to put $Y=X$ and $T=id$.\\
To prove (iii)$\Rightarrow$(i), assume that (iii) holds and $x,y\in X$ with $x\neq0$. We know that $x\perp_{\rho}\frac{-\rho(x,y)}{\|x\|^2}x+y$. It follows from (iii) that $\rho_*\big(Tx,\frac{-\rho(x,y)}{\|x\|^2}Tx+Ty\big)=0$, which implies that
$$ \rho_+\left(Tx,\frac{-\rho(x,y)}{\|x\|^2}Tx+Ty\right)=0\qquad {\rm or} \qquad \rho_-\left(Tx,\frac{-\rho(x,y)}{\|x\|^2}Tx+Ty\right)=0.$$
Hence
$$
\rho_+(Tx,Ty)=\frac{\|Tx\|^2}{\|x\|^2}\rho(x,y)\qquad {\rm or} \qquad \rho_-(Tx,Ty)=\frac{\|Tx\|^2}{\|x\|^2}\rho(x,y).$$
Therefore,
$$ x\perp_{\rho}y\Rightarrow Tx\perp_{\rho_-}Ty \qquad {\rm or} \qquad x\perp_{\rho}y\Rightarrow Tx\perp_{\rho_+}Ty.$$
Now, \cite[Theorem 5.1]{W} concludes that $X$ is smooth.

To show (ii)$\Rightarrow$(i), suppose that (ii) holds, $x,y\in X$ and $x\neq0$. Since $x\perp_{\rho_*}\frac{-\rho_+(x,y)}{\|x\|^2}x+y$, we have $Tx\perp_{\rho}\frac{-\rho_+(x,y)}{\|x\|^2}Tx+Ty$. It yields that
\begin{align}\label{Eq6}
\rho(Tx,Ty)=\frac{\|Tx\|^2}{\|x\|^2}\rho_+(x,y).
\end{align}
Moreover, $x\perp_{\rho_*}\frac{-\rho_-(x,y)}{\|x\|^2}x+y$. Similarly, we get
\begin{align}\label{Eq7}
\rho(Tx,Ty)=\frac{\|Tx\|^2}{\|x\|^2}\rho_-(x,y).
\end{align}
Now, from \eqref{Eq6} and \eqref{Eq7}, we conclude that $\rho_-(x,y)=\rho_+(x,y)$. Thus $X$ is smooth.
\end{proof}


\section{$\rho_*$-orthogonal additivity}
Let $X$ be a normed space. In this section, we will show that the pair $(X,\perp_{\rho_*})$ is an orthogonality space in the sense of R\"{a}tz. The conditions (O1)--(O3) follow easily from Proposition \ref{pr1}. In order to prove (O4), we need the following Lemma.
\begin{lemma}\label{lm1}
Let $(X,\|\cdot\|)$ be a normed space. Then for all $x\in X\setminus\{0\}$ and all $\lambda\geq0$ there exists $z\in X\setminus\{0\}$ such that
\begin{align*}
\rho_-(x,z)\rho_-(z,x)=\frac{\|x\|^2\|z\|^2}{1+\lambda}.
\end{align*}
\end{lemma}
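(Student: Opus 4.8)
The plan is to fix $x \in X \setminus \{0\}$ and $\lambda \ge 0$, set $c := \frac{\|x\|^2}{1+\lambda} \in (0, \|x\|^2]$, and reduce to producing a \emph{unit} vector $z$ with $\rho_-(x,z)\rho_-(z,x) = c$. First I would observe that both sides of the desired identity are positively homogeneous of degree two in $z$ (using $\rho_-(x, tz) = t\rho_-(x,z)$ and $\rho_-(tz, x) = t\rho_-(z, x)$ for $t > 0$), so I may normalize $\|z\| = 1$; and since only $x$ and $z$ enter, I may restrict $z$ to the unit circle $S$ of a fixed two-dimensional subspace $P \ni x$, which exists because $\dim X \ge 2$.

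The engine is an intermediate value argument along $S$. Writing $e := x/\|x\|$, the two endpoint values are transparent: since $\rho_\pm(u, u) = \|u\|^2$, one has $\rho_-(x, e)\rho_-(e, x) = \|x\| \cdot \|x\| = \|x\|^2$, which is the largest value the product can take, by the bound $|\rho_-(x,z)\rho_-(z,x)| \le \|x\|^2\|z\|^2$ from Proposition \ref{pr1}(ii). The first factor $f(z) := \rho_-(x, z)$ is continuous on $S$, being $\|x\|$ times the left G\^{a}teaux derivative of the norm at the fixed point $x$, hence $\|x\|$-Lipschitz in $z$, with $f(e) = \|x\| > 0$ and $f(-e) = -\rho_+(x, e) = -\|x\| < 0$. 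Thus along an arc issuing from $e$ there is a first direction $z^{\ast}$ with $f(z^{\ast}) = 0$, and the product $F(z) := \rho_-(x, z)\rho_-(z, x)$ drops to $F(z^{\ast}) = 0$. So $F$ equals $\|x\|^2$ at $e$ and $0$ at $z^{\ast}$, and morally the desired value $c$ lies in between.

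The difficulty is that $F$ need not be continuous: while $f$ is continuous, the second factor $z \mapsto \rho_-(z, x)$ is only lower semicontinuous (on $S$ it is a supremum, over $t<0$, of the difference quotients of $t \mapsto \|z + tx\|$, each continuous in $z$), while its companion $z \mapsto \rho_+(z, x)$ is upper semicontinuous, the two agreeing off the at most countable set of non-smooth (corner) directions of $S$. To cope with this I would run a semicontinuous intermediate value argument on the sub-arc where $f \ge 0$: there $F = f \cdot \rho_-(\cdot, x)$ is lsc and $\widetilde F := f \cdot \rho_+(\cdot, x)$ is usc, with $F \le \widetilde F$. Taking $z_a$ to be the last parameter along the arc at which $\widetilde F \ge c$ (the superlevel set $\{\widetilde F \ge c\}$ is closed by upper semicontinuity and contains $e$, as $\widetilde F(e) = \|x\|^2 \ge c$), upper semicontinuity yields $\widetilde F(z_a) \ge c$, while lower semicontinuity of $F$ together with $F \le \widetilde F < c$ just past $z_a$ forces $F(z_a) \le c$. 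Hence $f(z_a) > 0$ and $\rho_-(z_a, x) \le c/f(z_a) \le \rho_+(z_a, x)$.

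The final and most delicate step is to promote this bracketing to the exact equality the lemma demands. If $z_a$ is a smooth direction then $\rho_-(z_a, x) = \rho_+(z_a, x)$, the bracket collapses, and $F(z_a) = c$, finishing the proof. The genuine obstacle is the corner case, where $\rho_-(z_a, x) < \rho_+(z_a, x)$ and $c$ may sit strictly inside the gap; here I would exploit that along $S$ the one-sided limits of $\rho_-(\cdot, x)$ at $z_a$ are exactly the two extreme values $\rho_-(z_a, x)$ and $\rho_+(z_a, x)$, the endpoints of the supporting-functional segment at $z_a$, so that, using continuity of $f$ and the density of smooth directions, every value between $f(z_a)\rho_-(z_a,x)$ and $f(z_a)\rho_+(z_a,x)$, in particular $c$, is realized as a genuine value $F(z)$ for $z$ near $z_a$. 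Making this last realization fully rigorous when corners are possibly dense is, I expect, the main technical point of the lemma; the remainder is the routine homogeneity reduction and the semicontinuous sweep described above.
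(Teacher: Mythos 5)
Your reduction and your semicontinuous sweep are correct as far as they go: homogeneity legitimately restricts the problem to the unit circle $S$ of a two-dimensional subspace containing $x$; the function $f=\rho_-(x,\cdot)$ is indeed $\|x\|$-Lipschitz while $\rho_-(\cdot,x)$ is only lower and $\rho_+(\cdot,x)$ only upper semicontinuous; and your choice of $z_a$ does yield the bracket $F(z_a)\le c\le\widetilde F(z_a)$, i.e. $\rho_-(z_a,x)\le c/f(z_a)\le\rho_+(z_a,x)$ with $f(z_a)>0$. The genuine gap is the last step, and it cannot be repaired. By your own (correct) observation, the two one-sided limits of $\rho_-(\cdot,x)$ along $S$ at a corner $z_a$ are exactly $\rho_-(z_a,x)$ and $\rho_+(z_a,x)$; since $f$ is continuous, the two one-sided limits of $F$ at $z_a$ are therefore exactly the endpoints $F(z_a)$ and $\widetilde F(z_a)$ of your bracket, and the value $F(z_a)$ is itself one of them. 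Consequently, in a sufficiently small neighbourhood of $z_a$ every value of $F$ lies close to one of these two endpoints: a value $c$ strictly inside the gap is attained neither at $z_a$ nor near it. Density of smooth directions does not help, because on each side of the corner $F$ hugs the corresponding endpoint; $F$ simply jumps over $c$.

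This is not a removable technicality: the lemma itself is false, so no completion of your argument (or any other) can exist. Take $X=\mathbb{R}^2$ with $\|(a,b)\|=|a|+|b|$ and $x=(1,1)$. The norm is smooth at $x$, so $\rho_-(x,z)=\rho_+(x,z)=2(z_1+z_2)$ for every $z$, and an elementary case check on the signs of the coordinates of $z$ shows that for every $z\ne 0$ the product $\rho_-(x,z)\rho_-(z,x)$ equals either $0$ or $\|x\|^2\|z\|^2$, never anything in between; hence for $0<\lambda<\infty$ there is no $z\ne 0$ with $\rho_-(x,z)\rho_-(z,x)=\frac{\|x\|^2\|z\|^2}{1+\lambda}$. (The same example defeats Theorem \ref{Lm2}: the vectors $\rho_*$-orthogonal to $x$ are exactly $y=(s,-s)$, and for $\lambda>0$ none of them satisfies $x+y\perp_{\rho_*}\lambda x-y$, so (O4) fails.) For what it is worth, the paper's own proof founders at precisely the point you flagged: it asserts, citing \cite{A.S.T}, that $\varphi(t)=\frac{\|x\|^2\|x+tw\|^2}{1+\lambda}-\rho_-(x,x+tw)\rho_-(x+tw,x)$ is continuous, but $t\mapsto\rho_-(x+tw,x)$ jumps whenever $x+tw$ crosses a corner direction; in the example above with $w=(0,1)$ one finds $\varphi>0$ on $[-2,-1]$ and $\varphi<0$ on $(-1,0]$, so $\varphi$ has no zero at all. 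Your instinct that the corner case is the main point was exactly right; it is a fatal obstruction rather than a technical one, for your proposal and for the lemma alike.
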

\begin{proof}
If $\lambda=0$, then choose $z:=x$. Now, let $\lambda >0$. First, note that there exists $w_0\in X$, which is linearly independent of $x$ such that $\rho_+(x,w_0)\neq0$. Indeed, If $\rho_+(x,w)=0$ for each $w\in X$ being linearly independent of $x$, then $\rho_+(x,x+w_0)=0$. Hence $-\|x\|^2=\rho_+(x,w_0)=0$. Then $x=0$, which yields a contradiction. Moreover, there exists $w\in X$ which is linearly independent of $x$ such that $\rho_+(x,w)>0$. In fact, if $\rho_+(x,w)<0$ for any $w\in X$ being linearly independent of $x$, then $\rho_-(x,w_0)\leq\rho_+(x,w_0)<0$.

Further, since $x$ and $x+\rho_+(x,w_0)w_0$ are linearly independent, we conclude that $\rho_+\Big(x,x+\rho_+(x,w_0)w_0\Big)<0$. Due to
\begin{align*}
\rho_+\Big(x,x+\rho_+(x,w_0)w_0\Big)=\|x\|^2+\rho_+(x,w_0)\rho_-(x,w_0),
\end{align*}
we get $\|x\|^2+\rho_+(x,w_0)\rho_-(x,w_0)<0$, which is impossible because of
\begin{align*}
\|x\|^2+\rho_+(x,w_0)\rho_-(x,w_0)\geq\rho_+(x,w_0)\rho_-(x,w_0)\geq\rho_+(x,w_0)>0.
\end{align*}
Now, define
\begin{align*}
\varphi(t) :=\frac{\|x\|^2\|x+tw\|^2}{1+\lambda}-\rho_-(x,x+tw)\rho_-(x+tw,x).
\end{align*}
By \cite[Proposition 2.1.3]{A.S.T} and \cite[Lemma 2.8.2]{A.S.T}, $\varphi$ is continuous on $\mathbb{R}$. Moreover,
\begin{align*}
\varphi(0)=\|x\|^4\big(\frac{1}{1+\lambda}-1\big)<0.
\end{align*}
If $t_1=-\frac{\|x\|^2}{\rho_+(x,w)}<0$, then
\begin{align*}
\rho_-(x,x+t_1w)=\|x\|^2+t_1\rho_+(x,w) =\|x\|^2-\frac{\|x\|^2}{\rho_+(x,w)}\rho_+(x,w)=0.
\end{align*}
It ensures that
\begin{align*}
\varphi(t_1)=\frac{\|x\|^2\|x+t_1w\|^2}{1+\lambda}>0.
\end{align*}
Employing the mean value theorem we infer that there exists a number $t_0$ between $0$ and $t_1$ such that $\varphi(t_0)=0$. To complete the proof, it is enough to put $z=x+t_0w$.
\end{proof}
We are now in the position to establish the main result of this section.
\begin{theorem}\label{Lm2}
For any two-dimensional subspace $P$ of normed space $(X,\|\cdot\|)$ and for
every $x\in P$, $\lambda\geq0$, there exists a vector $y\in P$ such that
$$x\perp_{\rho_*} y\qquad {\rm and} \qquad x+y\perp_{\rho_*}\lambda x-y.$$
Conversely, the pair $(X,\perp_{\rho_*})$ is an orthogonality space in the sense of R\"{a}tz.
\end{theorem}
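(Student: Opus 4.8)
The plan is to verify the Thalesian property (O4) by hand and then assemble it with (O1)--(O3), which the surrounding text already derives from Proposition \ref{pr1}, to conclude that $(X,\perp_{\rho_*})$ is an orthogonality space. Fix a two-dimensional subspace $P$, a vector $x\in P$ and $\lambda\geq 0$. If $x=0$ I would simply take $y=0$, for which both required relations read $0\perp_{\rho_*}0$ and hold trivially; so assume $x\neq 0$. The governing idea is to produce $y$ of the very special shape $y=z-x$, where $z$ is the vector furnished by Lemma \ref{lm1}. This form is dictated by the observation that then $x+y=z$ and $\lambda x-y=(\lambda+1)x-z$, so that in \emph{both} orthogonalities the \emph{first} slot is one of the basic vectors $x$ or $z$. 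That is the crucial point, because the norm derivatives are not additive in the first variable, whereas the identity $\rho_\pm(a,ta+b)=t\|a\|^2+\rho_\pm(a,b)$ disposes of an arbitrary combination placed in the \emph{second} variable.

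First I would apply Lemma \ref{lm1} to the two-dimensional normed space $P$ (choosing the auxiliary vector there linearly independent of $x$, possible since $\dim P=2$), obtaining $z\in P\setminus\{0\}$ with $\rho_-(x,z)\rho_-(z,x)=\frac{\|x\|^2\|z\|^2}{1+\lambda}$; the restricted norm makes these derivatives agree with those computed in $X$. Inspecting the construction in that lemma, where $z=x+t_0w$ with $t_0$ strictly between $t_1<0$ and $0$ and $\rho_+(x,w)>0$, shows $\rho_-(x,z)=\|x\|^2+t_0\rho_+(x,w)\in(0,\|x\|^2)$; in particular $\rho_-(x,z)>0$. Replacing $z$ by the positive multiple $\frac{\|x\|^2}{\rho_-(x,z)}z$, which preserves the displayed relation since both sides are homogeneous of degree two in $z$, I may then assume $\rho_-(x,z)=\|x\|^2$, and the relation collapses to $\rho_-(z,x)=\frac{\|z\|^2}{1+\lambda}$.

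With $y:=z-x\in P$ the two conditions now drop out of the second-variable identity alone. For the first, $\rho_-(x,y)=\rho_-(x,-x+z)=-\|x\|^2+\rho_-(x,z)=0$, so $x\perp_{\rho_-}y$ and hence $x\perp_{\rho_*}y$. For the second, using $x+y=z$ and $\lambda x-y=(\lambda+1)x-z$, I compute $\rho_-(x+y,\lambda x-y)=\rho_-(z,-z+(\lambda+1)x)=-\|z\|^2+(\lambda+1)\rho_-(z,x)=-\|z\|^2+\|z\|^2=0$, whence $x+y\perp_{\rho_-}\lambda x-y$ and a fortiori $x+y\perp_{\rho_*}\lambda x-y$. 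This establishes (O4). To finish, (O1) holds because $\rho_*(x,0)=\rho_*(0,x)=0$, (O2) is precisely Proposition \ref{pr1}(iii), and (O3) follows by applying Proposition \ref{pr1}(i) in each variable to get $\rho_*(\alpha x,\beta y)=\alpha^2\beta^2\rho_*(x,y)$; thus $(X,\perp_{\rho_*})$ satisfies all four R\"{a}tz axioms.

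The step I expect to be delicate is the normalization $\rho_-(x,z)=\|x\|^2$. The bare statement of Lemma \ref{lm1} only forces the \emph{product} $\rho_-(x,z)\rho_-(z,x)$ to be positive, so its two factors merely share a common sign, and scaling $z$ by a positive constant cannot change that sign; it is the internal structure of the lemma's construction, not its conclusion, that pins the sign down as positive and so legitimizes the rescaling. Once that sign is secured, every remaining assertion is a one-line manipulation of $\rho_\pm(a,ta+b)=t\|a\|^2+\rho_\pm(a,b)$, and the entire difficulty of non-additivity in the first slot has been circumvented by the choice $y=z-x$.
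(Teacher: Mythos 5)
Your proof is correct and is, at bottom, the paper's own argument: the paper also feeds Lemma \ref{lm1} into the Thalesian property and sets $y=-x+\frac{1+\lambda}{\|z\|^2}\rho_-(z,x)\,z$, which by the relation $\rho_-(x,z)\rho_-(z,x)=\frac{\|x\|^2\|z\|^2}{1+\lambda}$ is exactly your vector $-x+\frac{\|x\|^2}{\rho_-(x,z)}\,z$; your positive rescaling of $z$ merely normalizes the coefficient to $1$. The only substantive divergence is how the sign ambiguity is handled. The paper never claims $\rho_-(x,z)>0$: it treats the lemma as a black box and runs a case analysis on the sign of $\rho_-(z,x)$, using the swap rules $\rho_+(x,\alpha y)=\alpha\rho_-(x,y)$ and $\rho_-(x,\alpha y)=\alpha\rho_+(x,y)$ for $\alpha<0$, so that in each case one of the two one-sided derivatives vanishes (which is all $\perp_{\rho_*}$ requires). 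You instead open the black box: from $t_1<t_0<0$ and $\rho_+(x,w)>0$ in the lemma's construction you correctly extract $\rho_-(x,z)=\|x\|^2+t_0\rho_+(x,w)\in(0,\|x\|^2)$ (and $\rho_-(x,x)=\|x\|^2>0$ in the $\lambda=0$ branch), which legitimizes the normalization and buys a case-free argument in which both orthogonalities come from $\rho_-$ alone. That is a genuine streamlining, but it makes the theorem depend on the \emph{proof} of Lemma \ref{lm1} rather than its statement; the modular fix is to strengthen the lemma to assert in addition that $\rho_-(x,z)>0$, which, as you verified, its proof already delivers. Your handling of (O1)--(O3) via Proposition \ref{pr1} and your explicit restriction of the construction to the subspace $P$ match (indeed slightly tighten) the paper.
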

\begin{proof}
Fix $x\in X$. If $x=0$, take $z=0$. By Lemma \ref{lm1}, for $x\neq0$ take a nonzero element $z\in X$ such that
\begin{align}\label{Eq8}
\rho_-(x,z)\rho_-(z,x)=\frac{\|x\|^2\|z\|^2}{1+\lambda}.
\end{align}
Let
$y:=-x+\frac{1+\lambda}{\|z\|^2}\rho_-(z,x)z$. We consider two cases:

$\mathbf{Case \,1.}$ Suppose that $\rho_+(z,x)>0$. Then
\begin{align*}
\rho_-(x,y)&=\rho_+\left(x,-x+\frac{1+\lambda}{\|z\|^2}\rho_-(z,x)z\right)\\
&=-\|x\|^2+\frac{1+\lambda}{\|z\|^2}\rho_-(z,x)\rho_-(x,z)\\
&=-\|x\|^2+\frac{1+\lambda}{\|z\|^2}\times\frac{\|x\|^2\|z\|^2}{1+\lambda}\qquad\mbox{(by \eqref{Eq8})}\\
&=-\|x\|^2+\|x\|^2=0.
\end{align*}
Thus $x\perp_{\rho_-}y$. Therefore, $x\perp_{\rho_*}y$. Now, assume that $\rho_-(z,x)<0$. Then
\begin{align*}
\rho_+(x,y)&=\rho_+\left(x,-x+\frac{1+\lambda}{\|z\|^2}\rho_-(z,x)z\right)\\
&=-\|x\|^2+\frac{1+\lambda}{\|z\|^2}\rho_-(z,x)\rho_-(x,z)\\
&=-\|x\|^2+\frac{1+\lambda}{\|z\|^2}\times\frac{\|x\|^2\|z\|^2}{1+\lambda} \qquad\mbox{(by \eqref{Eq8})}\\
&=-\|x\|^2+\|x\|^2=0.
\end{align*}
Hence $x\perp_{\rho_+}y$, whence $x\perp_{\rho_*}y$. Furthermore, if $\rho_-(z,x)>0$, then
\begin{align*}
\rho_+(x+y,y-\lambda x)&=\rho_+\left(\frac{1+\lambda}{\|z\|^2}\rho_-(z,x)z,\frac{1+\lambda}{\|z\|^2}\rho_-(z,x)z-(1+\lambda)x\right)\\
&=\frac{(1+\lambda)^2\rho^2_-(z,x)}{\|z\|^4}\|z\|^2+\rho_+\left(\frac{1+\lambda}{\|z\|^2}\rho_-(z,x)z,(1+\lambda)x\right)\\
&=\frac{(1+\lambda)^2\rho^2_-(z,x)}{\|z\|^2}-\frac{(1+\lambda)^2\rho^2_-(z,x)}{\|z\|^2}=0.
\end{align*}
It follows that $x+y\perp_{\rho_+}y-\lambda x$. Hence $x+y\perp_{\rho_-}\lambda x-y$ and so $x+y\perp_{\rho_*}\lambda x-y$.

$\mathbf{Case \,2.}$ Suppose that $\rho_+(z,x)<0$. We have
\begin{align*}
\rho_-(x+y,y-\lambda x)&=\rho_-\left(\frac{1+\lambda}{\|z\|^2}\rho_-(z,x)z,\frac{1+\lambda}{\|z\|^2}\rho_-(z,x)z-(1+\lambda)x\right)\\
&=\frac{(1+\lambda)^2\rho^2_-(z,x)}{\|z\|^4}\|z\|^2-\frac{(1+\lambda)^2\rho^2_-(z,x)}{\|z\|^2}=0,
\end{align*}
which yields that $x+y\perp_{\rho_-}y-\lambda x$. Then $x+y\perp_{\rho_+}\lambda x-y$. Therefore $x+y\perp_{\rho_*}\lambda x-y$.
\end{proof}
Let $X$ be a normed space and let $(G, +)$ be an Abelian group. Let us recall that a mapping $A\,:X\longrightarrow G$ is called additive if $A(x + y) = A(x)+A(y)$ for all $x, y \in X$, a mapping $B\,:X \times X\longrightarrow G$ is called biadditive if it is additive in both variables and a mapping $Q\,:X\longrightarrow G$ is called quadratic if $Q(x + y) + Q(x -y) = 2Q(x)+2Q(y)$ for all $x, y \in X$. As an immediate consequence of our results, we deduce the following assertion.
\begin{corollary}
Let $X$ be a normed space and let $(G, +)$ be an Abelian group. A mapping $f:X\longrightarrow G$ satisfies the condition
$$x\perp_{\rho_*}y \Longrightarrow f(x + y) = f(x) + f(y) \qquad (x, y \in X)$$
if and only if there exist an additive mapping $A\,:X\longrightarrow G$ and a biadditive and symmetric mapping $B\,:X \times X\longrightarrow G$ such that
$$f(x) = A(x) + B(x, x) \qquad(x\in X)$$
and
$$x\perp_{\rho_*}y \Longrightarrow B(x, y) = 0 \qquad (x, y \in X).$$
\end{corollary}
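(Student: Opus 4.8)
The plan is to derive both implications from the structural fact, established in Theorem \ref{Lm2}, that $(X,\perp_{\rho_*})$ is an orthogonality space in the sense of R\"{a}tz, combined with the general representation theorem of R\"{a}tz \cite{R} for orthogonally additive mappings on such spaces. The corollary is then, as announced, an immediate consequence.

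First I would dispose of the ``if'' direction by a direct computation requiring none of the orthogonality-space machinery. Assume $f(x)=A(x)+B(x,x)$ with $A$ additive, $B$ biadditive and symmetric, and $B(x,y)=0$ whenever $x\perp_{\rho_*}y$. Fixing such a pair $x,y$, biadditivity together with the symmetry of $B$ gives
\begin{align*}
B(x+y,\,x+y)=B(x,x)+2B(x,y)+B(y,y)=B(x,x)+B(y,y),
\end{align*}
since $B(x,y)=0$. Using the additivity of $A$ we then obtain
\begin{align*}
f(x+y)=A(x)+A(y)+B(x,x)+B(y,y)=f(x)+f(y),
\end{align*}
which is exactly the asserted $\rho_*$-orthogonal additivity of $f$.

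For the ``only if'' direction, the essential input is already in hand: conditions (O1)--(O3) follow from Proposition \ref{pr1}, and (O4) is precisely the content of Theorem \ref{Lm2}, so $(X,\perp_{\rho_*})$ is a genuine orthogonality space. I would then invoke R\"{a}tz's theorem \cite{R}, whose conclusion for an arbitrary orthogonally additive map into an Abelian group is exactly the claimed decomposition: one splits $f$ into its additive part $A$ and its quadratic part $Q$, where $Q$ is itself orthogonally additive, and recovers the symmetric biadditive map $B$ with $Q(x)=B(x,x)$; the orthogonal additivity of $Q$ forces $B(x,y)=0$ whenever $x\perp_{\rho_*}y$. When $G$ is uniquely $2$-divisible the recovery of $B$ is the familiar polarization $B(x,y)=\tfrac14\big[Q(x+y)-Q(x-y)\big]$.

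The one genuine subtlety, and the step I would treat with most care, is this passage from the quadratic part $Q$ to the biadditive map $B$ when $G$ is a general Abelian group, where division by $2$ is unavailable and the naive polarization formula is meaningless. This is precisely the technical core of R\"{a}tz's argument; since we are entitled to cite \cite{R}, it presents no obstacle here. Thus the corollary follows at once from Theorem \ref{Lm2} and \cite{R}.
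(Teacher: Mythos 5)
Your proposal is correct and takes essentially the same route as the paper: both arguments reduce the statement to Theorem \ref{Lm2} (that $(X,\perp_{\rho_*})$ is an orthogonality space in the sense of R\"{a}tz) and then invoke the general representation theorem for orthogonally additive mappings on such spaces --- you cite R\"{a}tz's original paper \cite{R}, while the paper cites the same result in the form \cite[Theorem 2.8.1]{A.S.T}. Your explicit verification of the ``if'' direction is a harmless addition, since that direction is already contained in the cited theorem.
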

\begin{proof}
According to Theorem \ref{Lm2}, $(X,\perp_{\rho_*})$ is an orthogonality space. So, employing \cite[Theorem 2.8.1]{A.S.T} completes the proof.
\end{proof}
Finally, as a consequence of Theorem \ref{Lm2} and \cite[Theorem 3]{Mo}, we have the following result.
\begin{corollary}
Let $X$ be a normed space and let $(G, +)$ be an Abelian group. Suppose that $Y$ is a real Banach space. If $f:X\longrightarrow G$ is a mapping fulfilling
$$x\perp_{\rho_*}y \Longrightarrow \|f(x + y) - f(x) - f(y)\| \leq \varepsilon \qquad (x, y \in X)$$
for some $\varepsilon$, then there
exist exactly an additive mapping $A\,:X\longrightarrow Y$ and exactly a quadratic mapping
$Q\,:X\longrightarrow Y$ such that
$$\|f(x) - f(0) - A(x) - Q(x)\| \leq \frac{68}{3}\varepsilon, \qquad (x\in X).$$
\end{corollary}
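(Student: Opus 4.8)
The plan is to treat this as a direct application of a ready-made Hyers--Ulam stability theorem for orthogonally additive mappings, once the appropriate orthogonality structure has been installed. All of the genuinely space-specific work has already been carried out in the preceding section, so the argument reduces to checking that the hypotheses of the cited result are met.

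First I would invoke Theorem \ref{Lm2}, which guarantees that the pair $(X, \perp_{\rho_*})$ is an orthogonality space in the sense of R\"{a}tz; equivalently, $\perp_{\rho_*}$ satisfies the axioms (O1)--(O4). This is the only nontrivial input, and it is precisely what allows the abstract stability machinery of \cite{Mo} to be applied to the relation $\perp_{\rho_*}$ rather than to the classical orthogonalities. I would then read the hypothesis on $f$ as the statement that $f$ is \emph{$\varepsilon$-approximately orthogonally additive} with respect to $\perp_{\rho_*}$: whenever $x \perp_{\rho_*} y$, the Cauchy difference satisfies $\|f(x+y) - f(x) - f(y)\| \leq \varepsilon$.

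Since the target $Y$ is a real Banach space, completeness is available for the usual direct (averaging) construction of the approximating additive and quadratic parts, which is exactly the setting of \cite[Theorem 3]{Mo}. Applying that theorem verbatim produces an additive mapping $A : X \longrightarrow Y$ and a quadratic mapping $Q : X \longrightarrow Y$ together with the explicit estimate
$$\|f(x) - f(0) - A(x) - Q(x)\| \leq \frac{68}{3}\varepsilon \qquad (x \in X),$$
and its uniqueness clauses account for the word ``exactly'' attached to $A$ and $Q$ in the statement.

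The point requiring attention, rather than a real obstacle, is the matching of frameworks: one must confirm that $\perp_{\rho_*}$ fits the abstract orthogonality-space hypothesis of \cite[Theorem 3]{Mo}, which is furnished by Theorem \ref{Lm2}, and that the codomain is complete, which is given. The constant $\frac{68}{3}$ is not recomputed here; it is inherited directly from \cite[Theorem 3]{Mo}.
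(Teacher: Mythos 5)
Your proposal is correct and is exactly the paper's argument: the paper derives this corollary by combining Theorem \ref{Lm2} (that $(X,\perp_{\rho_*})$ is an orthogonality space in the sense of R\"{a}tz) with \cite[Theorem 3]{Mo}, inheriting the constant $\frac{68}{3}$ and the uniqueness clauses verbatim, just as you do.
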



 \end{document}